\documentclass{amsart}[10pt,leqno]

\textwidth 135mm

\textheight 194mm

\parindent 0.5cm

\newtheorem{theorem}{Theorem}[section]
\newtheorem{lemma}[theorem]{Lemma}
\newtheorem{proposition}[theorem]{Proposition}

\newtheorem{problem}[theorem]{Problem}

\theoremstyle{definition}
\newtheorem{definition}[theorem]{Definition}
\newtheorem{example}[theorem]{Example}

\newtheorem{remark}[theorem]{Remark}

\newtheorem{algorithm}[theorem]{Algorithm}
\newtheorem{solution of system}[theorem]{Our solution}


\usepackage{amscd,amssymb}

\begin{document}

\title[]
{A Diophantine transport problem from 2016\\
    and its possible solution from 1903}
\author[]{Silvia Boumova, Vesselin Drensky, Boyan Kostadinov}
\date{}
\address{Faculty of Mathematics and Informatics,
Sofia University `` St. Kliment Ohridski'',
5 James Bourchier Blvd., 1164 Sofia, Bulgaria,
and Institute of Mathematics and Informatics,
Bulgarian Academy of Sciences,
Acad. G. Bonchev Str., Block 8,
1113 Sofia, Bulgaria}
\email{boumova@fmi.uni-sofia.bg, silvi@math.bas.bg}
\address{Institute of Mathematics and Informatics,
Bulgarian Academy of Sciences,
Acad. G. Bonchev Str., Block 8,
1113 Sofia, Bulgaria}
\email{drensky@math.bas.bg}
\address{Faculty of Applied Mathematics and Informatics,
Technical University of Sofia,
8 Kl. Ohridski Blvd., 1000 Sofia, Bulgaria}
\email{boyan.sv.kostadinov@gmail.com}

\thanks
{Partially supported by Grant KP-06-N-32/1 ``Groups and Rings -- Theory and Applications''
of the Bulgarian National Science Fund.}

\subjclass[2010]
{11D75, 11D72, 11Y50, 05A15, 30B10, 32A05, 90B06, 90C08.}
\keywords{Linear Diophantine inequalities, solutions in nonnegative integers, transport problem, generating functions, Laurent series, nice rational functions.}
\maketitle

\begin{abstract}
Motivated by a recent Diophantine transport problem about how to transport profitably a group of persons or objects,
we survey classical facts about solving systems of linear Diophantine equations and inequalities in nonnegative integers.
We emphasize on the method of Elliott from 1903 and its further development by MacMahon in his ``$\Omega$-Calculus'' or Partition Analysis.
As an illustration we obtain the solution of the considered transport problem in terms of
a formal power series in several variables which is an expansion of a rational function of a special form.
\end{abstract}

\section{Introduction}
The idea for this paper came from the very interesting recent papers by Robles-P\'{e}rez and Rosales \cite{RPR} and \cite{RPR1}.
Starting with a specific transport problem about how to transport profitably a group of persons or objects
the authors of \cite{RPR} and \cite{RPR1} have generalized it to
the following system of linear Diophantine inequalities.

Let $\mathbb{N}$ be the set of nonnegative integers, let $(a_1,\ldots,a_k)$ and $(b_1,\ldots,b_k)$ belong to $\mathbb{N}^k$,  and
let $a,b\in \mathbb{N}$. How to find the set $T$ of all solutions $y\in\mathbb{N}$ of the system
\[
\begin{array}{|ccl}
y&\ge & a_1 x_1 + \cdots + a_kx_k + a \\
y&\le & b_1 x_1 + \cdots + b_k x_k - b? \\
\end{array}
\]
The approach in \cite{RPR} and \cite{RPR1} is to prove that $T\cup\{0\}$ is a submonoid of the additive monoid $(\mathbb{N},+)$ and to develop an
algorithm for computing the minimal system of its generators. This is in the spirit of results in \cite{RGGB}, the main of which states
that there exists a one-to-one correspondence between the set of numerical semigroups (i.e. submonoids $S$ of $(\mathbb{N},+)$ such that
$\mathbb{N}\setminus S$ is a finite set) with a fixed minimal nonzero element and
the set of nonnegative integer solutions of a system of linear Diophantine inequalities.

Linear Diophantine equations and inequalities and their solutions in nonnegative integers
are classical objects which appear in many branches of mathematics, computer science and their applications.
Many methods have been developed for solving such systems. The purpose of our paper is to survey results, many of them with proofs,
starting from Euler, Gordan and Hilbert. Special attention is paid to
the method of Elliott \cite{E} from 1903 and its further development by MacMahon \cite{MM} in his ``$\Omega$-Calculus'' or Partition Analysis.
Later, this method was studied in detail by Stanley \cite{S1} who added new geometric ideas.
More recently, Domenjoud and Tom\'as \cite{DT} gave new life
to the method deriving an algorithm for solving systems of linear Diophantine equations, inequalities and
disequalities in nonnegative integers. As an illustration of the methods of Elliott and MacMahon
in the present paper we solve one of the Diophantine transport problems which motivated our project.

The ideas of Elliott and MacMahon have many other applications.
Andrews, alone or jointly with Paule, Riese, and Strehl published a series of twelve papers
 (I -- \cite{A}$,\ldots,$ XII -- \cite{AP}) on MacMahon's Partition Analysis, with numerous applications to different problems,
illustrating the power of the methods. The ``$\Omega$-Calculus'' was further improved by developing better algorithms and effective computer realizations
by Andrews, Paule, and Riese \cite{APR1, APR2}, and Xin \cite{X}. Other applications were given
by Berele \cite{B2, B3} (to algebras with polynomial identities), Bedratyuk and Xin \cite{BX} (to classical invariant theory),
and the authors in a series of papers, also jointly with Benanti, Genov, and Koev
(to algebras with polynomial identities, and classical and noncommutative invariant theory),
see \cite{BBDGK} and the references there.

Let
\[
a_{ij},a_i\in\mathbb{Z},\quad i=1,\ldots,m,j=1,\ldots,k,
\]
be arbitrary integers. Consider the system of Diophantine equations and inequalities
\begin{equation}\label{arbitrary transport system}
\begin{array}{|rcrcrl}
a_{11} x_1& + \cdots +& a_{1k} x_k &+& a_1&= 0 \\
&\cdots&&&&\\
a_{l1} x_1& + \cdots + &a_{lk} x_k &+& a_l&= 0 \\
a_{l+1,1} x_1& + \cdots +& a_{l+1,k} x_k&+& a_{l+1}&\geq 0 \\
&\cdots&&&&\\
a_{m1} x_1& + \cdots +& a_{mk} x_k&+&  a_m&\geq 0. \\
\end{array}
\end{equation}
Let the set of solutions of the system (\ref{arbitrary transport system}) in nonnegative integers be
\[
S=\{s=(s_1,\ldots,s_k)\in\mathbb{N}^k\mid s\text{ is a solution of the system}\}.
\]
The leitmotif of the paper is to apply a slight modification of the method as presented in the original paper by Elliott \cite{E},
to show how to calculate the function
\begin{equation}\label{characteristic transport series}
\chi_S(t_1,\ldots,t_k)=\sum_{s\in S}t_1^{s_1}\cdots t_k^{s_k},
\end{equation}
which describes the solutions of the system,
and to derive the parametric form of the solutions.
We give concrete calculations for the example in \cite{RPR}. Using similar methods one can handle also the example in \cite{RPR1}.

\section{The ideas of Euler, Gordan, and Hilbert seen from nowadays}
We start with some classical facts on systems of linear Diophantine equations.
For historical details and a survey of the methods for solving such systems we refer to
the sections on historical and further notes on linear Diophantine equations and on integer linear programming in the book of Schrijver \cite{Sch}
and the section of brief historical notes in the Ph.D. Thesis of Tom\'as \cite{T}.

The following fact was known already by Euler in 1748, see \cite{Eu1, Eu2, Eu3}.

\begin{lemma}\label{Lemma of Euler}
Let all coefficients $a_{ij}$ and $b_j$ of the system
\begin{equation}\label{system of Euler}
\begin{array}{|rcrl}
a_{11} x_1 &+ \cdots + &a_{1k} x_k =&b_1\\
&\cdots&&\\
a_{m1} x_1 &+ \cdots + &a_{mk} x_k =&b_m,\\
\end{array}
\end{equation}
be nonnegative integers such that for each $j=1,\ldots,k$ at least one of the coefficients
$a_{ij}$, $i=1,\ldots,m$, is different from $0$. Then the number of solutions in nonnegative integers of the system
is equal to the coefficient of $t_1^{b_1}\cdots t_m^{b_m}$ of the expansion as a power series of the product
\begin{equation}\label{coefficients of system of Euler}
\prod_{j=1}^k\frac{1}{1-t_1^{a_{1j}}\cdots t_m^{a_{mj}}}.
\end{equation}
\end{lemma}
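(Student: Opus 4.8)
The plan is to carry out the whole argument inside the ring of formal power series $\mathbb{Z}[[t_1,\ldots,t_m]]$. First I would point out that the hypothesis ``for each $j$ at least one $a_{ij}\neq 0$'' is exactly what makes each factor in \eqref{coefficients of system of Euler} a legitimate formal power series: setting $u_j=t_1^{a_{1j}}\cdots t_m^{a_{mj}}$, the hypothesis says $u_j$ has strictly positive total degree, so $1-u_j$ is invertible in $\mathbb{Z}[[t_1,\ldots,t_m]]$ and
\[
\frac{1}{1-u_j}=\sum_{x_j=0}^{\infty}u_j^{\,x_j}=\sum_{x_j=0}^{\infty}t_1^{a_{1j}x_j}\cdots t_m^{a_{mj}x_j}.
\]

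Next I would multiply these $k$ series together. Since the product of finitely many formal power series is well defined and distributivity may be applied term by term, the product \eqref{coefficients of system of Euler} equals
\[
\sum_{(x_1,\ldots,x_k)\in\mathbb{N}^k}t_1^{a_{11}x_1+\cdots+a_{1k}x_k}\cdots t_m^{a_{m1}x_1+\cdots+a_{mk}x_k}.
\]
Consequently the coefficient of $t_1^{b_1}\cdots t_m^{b_m}$ is the number of tuples $(x_1,\ldots,x_k)\in\mathbb{N}^k$ for which $a_{i1}x_1+\cdots+a_{ik}x_k=b_i$ holds for every $i=1,\ldots,m$; that is precisely the number of solutions of \eqref{system of Euler} in nonnegative integers, which is what we want.

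The one point that needs genuine care is to confirm that this coefficient is a finite nonnegative integer, i.e.\ that only finitely many tuples $(x_1,\ldots,x_k)$ contribute a given monomial $t_1^{b_1}\cdots t_m^{b_m}$, so that the rearrangement of the infinite sums is legitimate; this is where I expect the only (minor) obstacle to lie, and it is again resolved by the hypothesis. For each $j$ fix an index $i(j)$ with $a_{i(j)j}\geq 1$; then any contributing tuple satisfies $x_j\leq a_{i(j)j}x_j\leq a_{i(j)1}x_1+\cdots+a_{i(j)k}x_k=b_{i(j)}\leq\max_{1\le i\le m}b_i$, so each coordinate $x_j$ is bounded. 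Hence the set of contributing tuples (and, in particular, the full solution set $S$) is finite, the coefficient is a well-defined element of $\mathbb{N}$, and the identification carried out above is rigorous, completing the proof.
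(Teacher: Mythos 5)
Your proof is correct and follows essentially the same route as the paper: expand each factor $\frac{1}{1-t_1^{a_{1j}}\cdots t_m^{a_{mj}}}$ as a geometric series, multiply out, and read off the coefficient of $t_1^{b_1}\cdots t_m^{b_m}$ as the number of nonnegative integer solutions. The only difference is that you spell out explicitly why the hypothesis (each column having some $a_{ij}\ge 1$) guarantees both that the factors are legitimate formal power series and that the coefficient is finite, which the paper takes for granted; this is a reasonable addition but not a different method.
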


\begin{proof}
Using the formula
\[
\frac{1}{1-z}=1+z+z^2+\cdots,
\]
we obtain immediately that
\[
\prod_{j=1}^k\frac{1}{1-t_1^{a_{1j}}\cdots t_m^{a_{mj}}}=\prod_{j=1}^k\sum_{s_j\geq 0}t_1^{a_{1j}s_j}\cdots t_m^{a_{mj}s_j}
\]
\[
=\sum_{s_j\geq 0}t_1^{\sum_{j=1}^ka_{1j}s_j}\cdots t_m^{\sum_{j=1}^ka_{mj}s_j}=\sum_{b_i\geq 0}c_bt_1^{b_1}\cdots t_m^{b_m},
\]
where the coefficient $c_b$ is equal to the number of $k$-tuples $(s_1,\ldots,s_k)\in{\mathbb N}^k$ such that
\[
a_{i1}s_1+\cdots+a_{ik}s_k=b_i,\quad i=1,\ldots,m,
\]
i.e. to the number of solutions of the system (\ref{system of Euler}).
\end{proof}

\begin{remark}\label{to find solutions}
If we replace in Lemma \ref{Lemma of Euler} the product (\ref{coefficients of system of Euler}) by
\[
\prod_{j=1}^k\frac{1}{1-z_jt_1^{a_{1j}}\cdots t_m^{a_{mj}}},
\]
then the coefficient of $t_1^{b_1}\cdots t_k^{b_k}$ will be a polynomial
\[
\chi_S(z_1,\ldots,z_k)=\sum_{s\in S}z_1^{s_1}\cdots z_k^{s_k}
\]
in $z_1,\ldots,z_k$, where $S\subset {\mathbb N}^k$ is the set of the solutions $s=(s_1,\ldots,s_k)$ of the system (\ref{system of Euler}).
\end{remark}

\begin{example}\label{example Euler}
Given the system
\[
\begin{array}{|rcrcrl}
x_1&+&x_2&+&x_3&=10\\
x_1&+&2x_2&+&3x_3&=15\\
\end{array}
\]
we expand the product
\[
\frac{1}{(1-z_1t_1t_2)(1-z_2t_1t_2^2)(1-z_3t_1t_2^3)}
\]
as a power series and find that the coefficient of $t_1^{10}t_2^{15}$ is equal to
$z_1^7z_2z_3^2+z_1^6z_2^3z_3+z_1^5z_2^5$. Hence the system has three solutions
\[
(s_1,s_2,s_3)=(7,1,2),(6,3,1),(5,5,0).
\]
\end{example}

Now we shall consider systems of homogeneous linear Diophantine equations
\begin{equation}\label{arbitrary homogeneous system}
\begin{array}{|rcrl}
a_{11} x_1& + \cdots +& a_{1k} x_k&= 0 \\
&\cdots&&\\
a_{m1} x_1& + \cdots +& a_{mk} x_k&= 0, \\
\end{array}
\end{equation}
where the coefficients $a_{ij}$ are arbitrary integers. As before, we shall be interested in solutions in nonnegative integers.
We introduce a partial order on ${\mathbb N}^k$:
\begin{equation}\label{partial order}
q'=(q_1',\ldots,q_k')\preceq (q_1'',\ldots,q_k'')=q''\text{ if }q_j'\leq q_j'',\quad j=1,\ldots,k.
\end{equation}
If $q'\prec q''$ are two solutions, then
\[
q=q''-q'=(q_1,\ldots,q_k)=(q_1''-q_1',\ldots,q_k''-q_k')\in{\mathbb N}^k
\]
is also a solution and $q''$ is a sum of two smaller solutions $q$ and $q'$. Hence every solution of the system (\ref{arbitrary homogeneous system})
is a sum of {\it minimal} (or {\it fundamental}) solutions. In 1873 Gordan \cite{G1} called the minimal solutions {\it irreducible}.
He proved that every system (\ref{arbitrary homogeneous system}) has a finite number of minimal solutions. Here we give the proof from the book
by Grace and Young \cite[Chapter VI, Section 97]{GY} which is very close to the original proof of Gordan.

\begin{theorem}\label{theorem of Gordan}
The system (\ref{arbitrary homogeneous system}) has a finite number of minimal solutions.
\end{theorem}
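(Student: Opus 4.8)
The plan is to prove finiteness of the set of minimal (irreducible) solutions by a descent/well-ordering argument that is essentially Gordan's original idea, as recorded by Grace and Young. The key point is that one can bound the ``size'' of an irreducible solution, or rather, show that an infinite sequence of incomparable solutions is impossible.

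First I would reduce to a bookkeeping situation. Write each solution $q=(q_1,\ldots,q_k)\in{\mathbb N}^k$ of the homogeneous system (\ref{arbitrary homogeneous system}). I would separate the coordinates according to the sign of the coefficients in each equation; more precisely, for the single-equation case $a_1x_1+\cdots+a_kx_k=0$, group the indices into those with $a_j>0$ and those with $a_j<0$, so that any solution balances a positive sum $P(q)=\sum_{a_j>0}a_jq_j$ against a negative sum $N(q)=\sum_{a_j<0}(-a_j)q_j$, with $P(q)=N(q)$. The general system is handled by induction on the number of equations $m$: having parametrized the solutions of the first $m-1$ equations by finitely many minimal solutions, substitute that parametrization into the last equation and observe it is again a homogeneous linear Diophantine equation in the new nonnegative parameters, so the one-equation case suffices to drive the induction.

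For the one-equation case, the core is an infinite-descent or minimal-counterexample argument: suppose there were infinitely many irreducible solutions $q^{(1)},q^{(2)},\ldots$, necessarily pairwise incomparable under the order $\preceq$ of (\ref{partial order}) (since an irreducible solution cannot lie $\succeq$ another nonzero solution). I would then extract, by an iterated pigeonhole on one coordinate at a time (Dickson's lemma in spirit), an infinite subsequence that is coordinatewise nondecreasing; but two solutions with $q'\preceq q''$ and $q'\neq q''$ cannot both be irreducible, because $q''=q'+(q''-q')$ expresses $q''$ as a sum of two smaller solutions, contradicting irreducibility of $q''$. This contradiction shows the set of irreducible solutions is finite. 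The version in Grace and Young phrases this slightly differently — bounding the value $P(q)=N(q)$ of an irreducible solution — but the two formulations are interchangeable: if infinitely many irreducible solutions existed, their common positive-negative value would be unbounded, yet one shows directly that an irreducible solution of large value must dominate a smaller solution, again a contradiction.

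The main obstacle, and the place where care is needed, is the passage from a single equation to the full system, i.e. making the induction on $m$ clean: one must check that after substituting the finitely-many-minimal-solutions parametrization of the first $m-1$ equations into the $m$-th equation, what results really is a homogeneous linear Diophantine system of the same type (it is, because the substitution is linear with integer coefficients), and that minimal solutions of the original system correspond to minimal solutions of the reduced equation in the parameters. The incomparability-implies-finiteness step itself is really Dickson's lemma, which I would either cite or prove inline by the coordinatewise pigeonhole sketched above; keeping that argument self-contained and matched to the notation of (\ref{partial order}) is the only genuinely delicate bookkeeping.
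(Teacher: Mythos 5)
Your argument is correct, but it blends two strategies in a way the paper keeps separate, and the blend is somewhat redundant. The paper's proof of this theorem is deliberately \emph{constructive}: for a single equation $a_1x_1+\cdots+a_mx_m=b_1y_1+\cdots+b_ny_n$ with $a_i,b_j>0$ it shows, by exhibiting an explicit smaller solution to subtract, that every minimal solution satisfies $r_i\le b_1+\cdots+b_n$ and $s_j\le a_1+\cdots+a_m$; this bound is the whole point, since it is what lets one enumerate the candidates (Remark~\ref{solution by method of Euler}, Examples~\ref{solution of system by Gordan} and~\ref{the same example}) and is what drives the induction over equations, exactly as you sketch. Your core argument for the one-equation case is instead the Dickson-type antichain argument: minimal solutions are pairwise $\preceq$-incomparable, and an infinite set in $\mathbb{N}^k$ cannot be an antichain. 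That is correct, but once you invoke it there is no need for the single-equation reduction or the induction on the number of equations at all — the minimal solutions of the \emph{whole} system (\ref{arbitrary homogeneous system}) already form an antichain in $\mathbb{N}^k$, so Lemma~\ref{proof of Hilbert} kills the theorem in one line. The paper says exactly this just after Lemma~\ref{proof of Hilbert}: Dickson's lemma gives a nonconstructive proof of the theorem as an immediate corollary. So your proposal is not wrong, but it carries the full Gordan bookkeeping (the induction, the substitution of the parametrization into the next equation) while replacing the one ingredient that makes that bookkeeping worthwhile — the explicit coordinate bound — with a nonconstructive finiteness principle that would have made the bookkeeping unnecessary. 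You do gesture at the Grace--Young bound in your last paragraph and correctly observe that the two formulations are interchangeable as far as finiteness goes; if you want the proof to serve the algorithmic purpose the paper has in mind, you should make that bounding argument the primary one and drop the Dickson detour, or else drop the induction and use Dickson alone.
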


\begin{proof}
We start with a single equation. Changing the order of the unknowns we rewrite the equation in the form
\begin{equation}\label{single equation}
a_1x_1+\cdots+a_mx_m=b_1y_1+\cdots+b_ny_n,
\end{equation}
where all $a_i$ and $b_j$ are positive integers. The equation has $mn$ solutions
\[
x_r=b_s,y_s=a_r
\]
and all other variables equal to 0. Now, let us assume that in the solution
$(r,s)=(r_1,\ldots,r_m,s_1,\ldots,s_n)$ one of the coordinates $r_i$ (e.g. $r_1$) is greater than $b_1+\cdots+b_n$.
Hence
\[
b_1s_1+\cdots+b_ns_n=a_1r_1+\cdots+a_mr_m\geq a_1r_1>a_1(b_1+\cdots+b_m),
\]
\[
b_1(s_1-a_1)+\cdots+b_n(s_n-a_1)>0
\]
and there exists an $s_j$ (e.g. $s_1$) such that $s_j>a_1$. Then
\[
(r,s)=(r_1-b_1,r_2,\ldots,r_m,s_1-a_1,s_2,\ldots,s_n)+(b_1,0,\ldots,0,a_1,0,\ldots,0)
\]
and the solution $(r,s)$ is not minimal. In this way the minimal solutions satisfy the condition
\[
r_i\leq b_1+\cdots+b_n,s_j\leq a_1+\cdots+a_m.
\]
Hence they are a finite number and can be found explicitly.
Let $(r^{(1)},s^{(1)}),\ldots,(r^{(p)},s^{(p)})$ be all minimal solutions of the first equation of the system (\ref{arbitrary homogeneous system}).
Then all solutions of the first equation are in the form
\begin{equation}\label{minimal solutions of one equation}
q=(r,s)=(r^{(1)},s^{(1)})z_1+\cdots+(r^{(p)},s^{(p)})z_p,\quad z_i\in\mathbb{N}.
\end{equation}
Replacing $(r,s)$ in the second equation of the system (\ref{arbitrary homogeneous system}) we obtain an equation
\begin{equation}\label{second equation}
c_1z_1+\cdots+c_pz_p=0
\end{equation}
with unknowns $z_1,\ldots,z_p$.
Then the minimal solutions $q=(q_1,\ldots,q_k)$ of the first two equations of (\ref{arbitrary homogeneous system})
are among the solutions (\ref{minimal solutions of one equation})
obtained from the minimal solutions of the equation (\ref{second equation}).
Again, we can find them explicitly. Continuing in the same way we can find all minimal solutions of the system
(\ref{arbitrary homogeneous system}).
\end{proof}

\begin{remark}\label{solution by method of Euler}
We can find the candidates for the minimal solutions of the equation (\ref{single equation})
combining the method of the proof of Theorem \ref{theorem of Gordan} with the method of Euler from Lemma \ref{Lemma of Euler}
as modified in Remark \ref{to find solutions}. We consider the power series
\[
T(t_1,\ldots,t_m,z)=\prod_{i=1}^m\frac{1}{1-t_iz^{a_i}}=\sum_{x_i\geq 0} t_1^{x_1}\cdots t_m^{x_m}z^{a_1x_1+\cdots+a_mx_m}=\sum_{k\geq 0}T_k(t_1,\ldots,t_m)z^k,
\]
\[
U(u_1,\ldots,u_m,z)=\prod_{j=1}^n\frac{1}{1-u_jz^{b_j}}=\sum_{y_j\geq 0} u_1^{y_1}\cdots u_n^{y_n}z^{b_1y_1+\cdots+b_ny_n}=\sum_{k\geq 0}U_k(u_1,\ldots,u_n)z^k.
\]
The polynomials $T_k(t_1,\ldots,t_m)$ and $U_k(u_1,\ldots,u_n)$, respectively, are
sums of monomials of the form $t_1^{x_1}\cdots t_m^{x_m}$ and $u_1^{y_1}\cdots u_n^{y_n}$
and each pair of these monomials gives a solution of (\ref{single equation}) of the form
\[
a_1x_1+\cdots+a_mx_m=b_1y_1+\cdots+b_ny_n=k.
\]
By the proof of Theorem \ref{theorem of Gordan} the candidates for minimal solutions satisfy the conditions $x_i\leq b_1+\cdots+b_n$  and $y_j\leq a_1+\cdots+a_m$.
Hence it is sufficient to compute the polynomials $T_k(t_1,\ldots,t_m)$ and $U_k(u_1,\ldots,u_n)$ for $k\leq (a_1+\cdots+a_m)(b_1+\cdots+b_n)$.
\end{remark}

\begin{example}\label{solution of system by Gordan}
Let us consider the system
\begin{equation}\label{example homogeneous system}
\begin{array}{|rcrcrrl}
x_1&+&2x_2&-&x_3&-x_4&=0\\
2x_1&+&3x_2&-&2x_3&-x_4&=0.\\
\end{array}
\end{equation}
We rewrite the first equation in the form
\[
x_1+2x_2=y_3+y_4.
\]
By the proof of Theorem \ref{theorem of Gordan} every minimal solution $q=(r_1,r_2,s_1,s_2)$ satisfies the conditions
\[
0<r_1+r_2,\quad r_1,r_2\leq 2,\quad 0<s_1+s_2,\quad s_1,s_2\leq 3.
\]
There are $8$ possibilities for $r_1+r_2$ and for each $(r_1,r_2)$
there are $r=r_1+r_2+1$ possibilities $(r,0),(r-1,1),\ldots,(0,r)$ for $(s_1,s_2)$.
Simple calculations give that there are $35$ candidates for minimal solutions.
We start with the cases $(r_1,r_2)=(0,1)$ and $(1,0)$ and obtain $5$ solutions
\begin{equation}\label{fisrt minimal solutions}
\begin{array}{c}
q^{(1)}=(0,1,0,2),\quad q^{(2)}=(0,1,1,1),\quad q^{(3)}=(0,1,2,0),\\
\\
q^{(4)}=(1,0,0,1),\quad q^{(5)}=(1,0,1,0).\\
\end{array}
\end{equation}
If $r_1>1$, then $s_1+s_2>1$ and the solution $q=(r_1,r_2,s_1,s_2)$ is not minimal because $q^{(i)}\prec q$ for some $i=4,5$.
By similar argument we derive that the other solutions with $r_1+r_2>1$ are also not minimal. Thus we obtain that the minimal solutions of
the first equation of (\ref{example homogeneous system}) are those in (\ref{fisrt minimal solutions}) and all solutions of this equation are
\[
q=\sum_{i=1}^5t_iq^{(i)}=(t_4+t_5,t_1+t_2+t_3,t_2+2t_3+t_5,2t_1+t_2+t_4),\quad t_i\geq 0.
\]
The second equation of (\ref{example homogeneous system}) becomes
\[
2x_1+3x_2-2x_3-x_4=2(t_4+t_5)+3(t_1+t_2+t_3)-2(t_2+2t_3+t_5)-(2t_1+t_2+t_4)
\]
\[
=t_1-t_3+t_4=0.
\]
By the proof of Theorem \ref{theorem of Gordan} again, $t_1,t_4\leq 1$, $t_3\leq 2$,
and the candidates for minimal solutions of the equation $t_1-t_3+t_4=0$ are
\[
(t_1,t_3,t_4)=(1,1,0),(0,1,1),(1,2,1).
\]
Only the first two solutions are minimal. We have to add also the minimal solutions
\[
(t_1,t_2,t_3,t_4,t_5)=(0,1,0,0,0),(0,0,0,0,1)
\]
and obtain the candidates for minimal solutions of the original system (\ref{example homogeneous system})
\[
q=(0,2,2,2),(1,1,2,1),(0,1,1,1),(1,0,1,0).
\]
The first two solutions are not minimal and we obtain all minimal solutions of (\ref{example homogeneous system})
\[
q^{(1)}=(0,1,1,1),q^{(2)}=(1,0,1,0).
\]
\end{example}

\begin{example}\label{the same example}
We consider the same system (\ref{example homogeneous system}).
Applying the method in Remark \ref{solution by method of Euler} to the first equation $x_1+2x_2=y_3+y_4$ of the system,
we start with the power series
\[
T(t_1,t_2,z)=\frac{1}{(1-t_1z)(1-t_2z^2)}=\sum_{k\geq 0}T_k(t_1,t_2)z^k,
\]
\[
U(u_1,u_2,z)=\frac{1}{(1-u_1z)(1-u_2z)}=\sum_{k\geq 0}U_k(u_1,u_2)z^k
\]
and compute the first $6$ polynomials $T_k$ and $U_k$ ($k=1,\ldots,6$). For example
\[
T_1=t_1, U_1=u_1+u_2,\quad T_2=t_1^2+t_2, U_2=u_1^2+u_1u_2+u_2^2,
\]
which gives the pair of monomials
\[
(t_1,u_1),(t_1,u_2), (t_1^2,u_1^2),(t_1^2,u_1u_2),(t_1^2,u_2^2), (t_2,u_1^2),(t_2,u_1u_2),(t_2,u_2^2)
\]
producing the solutions
\[
(1,0,1,0),(1,0,0,1),(2,0,2,0),(2,0,1,1),(2,0,0,2),(0,1,2,0),(0,1,1,1),(0,1,0,2).
\]
Then by direct verification we select the minimal solutions of the equation and continue with the second equation of the system
(\ref{example homogeneous system}).
\end{example}

By the Hilbert Basis theorem \cite{H} every ideal of the polynomial algebra ${\mathbb Q}[x_1,\ldots,x_k]$ is finitely generated.
The following property of the partial order (\ref{partial order}) is known as the {\it Dickson lemma} with easy proof by induction in \cite{Di}.
As it is mentioned in \cite{Di} it is a direct consequence of the Hilbert Basis theorem applied to monomial ideals in ${\mathbb Q}[x_1,\ldots,x_k]$.

\begin{lemma}\label{proof of Hilbert}
Let $J$ be a subset of ${\mathbb N}^k$. Then $J$ has a finite subset
\[
\{q^{(i)}=(q_1^{(i)},\ldots,q_k^{(i)})\mid i=1,\ldots,n\}
\]
with the property that for any $q=(q_1,\ldots,q_k)\in J$ there exists a $q^{(i)}$ such that $q^{(i)}\preceq q$.
\end{lemma}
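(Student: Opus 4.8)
The plan is to give a self-contained proof by induction on the number $k$ of coordinates, and to indicate at the end how the statement also follows from the Hilbert Basis theorem applied to monomial ideals, as asserted above.

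For $k=1$ the set $J\subseteq{\mathbb N}$ either is empty, in which case the empty subset works, or has a least element $q^{(1)}$, and then $\{q^{(1)}\}$ has the required property. For the inductive step assume the lemma holds in ${\mathbb N}^{k-1}$ and let $J\subseteq{\mathbb N}^k$ be nonempty. Fix any element $a=(a_1,\ldots,a_k)\in J$. For each index $j\in\{1,\ldots,k\}$ and each value $c\in\{0,1,\ldots,a_j-1\}$ consider the ``slice''
\[
J_{j,c}=\{q\in J\mid q_j=c\}
\]
and its image $\pi_j(J_{j,c})\subseteq{\mathbb N}^{k-1}$ under the projection $\pi_j$ that forgets the $j$-th coordinate. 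By the inductive hypothesis $\pi_j(J_{j,c})$ has a finite subset dominating, in the order on ${\mathbb N}^{k-1}$, every one of its elements; lifting each element of this subset back to $J_{j,c}$ — which is possible since $\pi_j$ maps $J_{j,c}$ onto $\pi_j(J_{j,c})$ — we obtain a finite set $B_{j,c}\subseteq J_{j,c}$ such that every $q\in J_{j,c}$ satisfies $b\preceq q$ for some $b\in B_{j,c}$; note that here $\preceq$ holds in \emph{all} $k$ coordinates, because $b$ and $q$ agree in the $j$-th coordinate.

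Now put
\[
B=\{a\}\cup\bigcup_{j=1}^k\bigcup_{c=0}^{a_j-1}B_{j,c},
\]
a finite subset of $J$. Given an arbitrary $q=(q_1,\ldots,q_k)\in J$, either $q_j\geq a_j$ for every $j$, so that $a\preceq q$, or there is some $j$ with $q_j<a_j$, i.e. $q_j=c$ for some $c\in\{0,\ldots,a_j-1\}$; then $q\in J_{j,c}$ and $b\preceq q$ for some $b\in B_{j,c}\subseteq B$. In either case $q$ dominates an element of $B$, which completes the induction.

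The only point requiring care is the bookkeeping in the inductive step — making sure the lift from $\pi_j(J_{j,c})$ back into $J_{j,c}$ respects the order — but this is automatic because all members of a fixed slice share the same $j$-th coordinate, so there is no real obstacle. Alternatively, and even more briefly, one may form the monomial ideal $I=(x_1^{q_1}\cdots x_k^{q_k}\mid (q_1,\ldots,q_k)\in J)$ in ${\mathbb Q}[x_1,\ldots,x_k]$; by the Hilbert Basis theorem $I$ is generated by finitely many polynomials, hence, collecting the monomials occurring in them, by finitely many of the monomials $x_1^{q^{(i)}_1}\cdots x_k^{q^{(i)}_k}$ with $q^{(i)}\in J$, and since a monomial lying in a monomial ideal is divisible by one of the generating monomials, for every $q\in J$ one has $q^{(i)}\preceq q$ for a suitable $i$.
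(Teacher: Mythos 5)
The paper gives no proof of this lemma at all: it only cites Dickson for an ``easy proof by induction'' and remarks that the statement follows from the Hilbert Basis theorem applied to monomial ideals. Your inductive argument supplies exactly the proof the paper alludes to and omits, and it is correct: fixing $a\in J$ and slicing $J$ along the hyperplanes $q_j=c$ for $c<a_j$ reduces the problem to finitely many $(k-1)$-dimensional instances, and the point that projection and lifting within a slice preserve $\preceq$ (because the $j$-th coordinate is frozen) is precisely the bookkeeping that makes the induction go through. Your closing sketch via the Hilbert Basis theorem matches the route the paper mentions, but the phrase ``collecting the monomials occurring in them'' is slightly too loose: the monomials appearing in an arbitrary finite generating set $f_1,\ldots,f_r$ of the ideal need not themselves be of the form $x^q$ with $q\in J$. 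The fix is standard: either write each $f_i$ as a $\mathbb{Q}[x_1,\ldots,x_k]$-linear combination of the original monomial generators $x^q$, $q\in J$, and collect those finitely many $x^q$, or invoke directly the Noetherian fact that an ideal generated by any family of elements is already generated by some finite subfamily. With that small repair, both of your arguments are sound, and together they make explicit what the paper merely references.
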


We should mention that this lemma was used by Gordon \cite{G2} in 1899 in his proof of the Hilbert Basis theorem.
Clearly, as a corollary we immediately obtain also a nonconstructive proof of Theorem \ref{theorem of Gordan}.

It is interesting to know the behavior of the minimal solutions of the system (\ref{arbitrary homogeneous system}).
It can be given in terms of recursion theory.

\begin{definition}\label{primitive recursive function in f}
Let $f:{\mathbb N}\to {\mathbb N}$ be an arbitrary function.
A function $h:{\mathbb N}^k\to {\mathbb N}$ is {\it primitive recursive in $f$} if $h$ can be obtained by a finite number of
steps applying the following rules, starting with the function $f$, the constant function $0$, the successor function $s:{\mathbb N}\to {\mathbb N}$
(defined by $s(n)=n+1$, $n\in\mathbb N$) and the projection function $p^k_i:{\mathbb N}^k\to {\mathbb N}$, $i=1,\ldots,k$ (defined by
$p^k_i(n_1,\ldots,n_k)=n_i$, $(n_1,\ldots,n_k)\in {\mathbb N}^k$):

(1) The functions $f$, $0$, $s$ and $p^k_i$ are primitive recursive in $f$;

(2) {\it Substitution:} If $g:{\mathbb N}^k\to {\mathbb N}$ and $h_i:{\mathbb N}^m\to {\mathbb N}$, $m=1,\ldots,k$, are primitive recursive in $f$, then
the function $g(h_1,\ldots,h_k):{\mathbb N}^m\to {\mathbb N}$ is also primitive recursive in $f$;

(3) {\it Primitive recursion:} If $g:{\mathbb N}^k\to {\mathbb N}$ and $h:{\mathbb N}^{k+2}\to {\mathbb N}$, are primitive recursive in $f$, then
the primitive recursion $p:{\mathbb N}^{k+1}\to {\mathbb N}$ of $g$ and $h$ defined by
\[
p(0,n_1,\ldots,n_k)=g(n_1,\ldots,n_k)\text{ and }p(s(m),n_1,\ldots,n_k)=h(m,p(m,n_1,\ldots,n_k)),
\]
$m\in{\mathbb N}$, $(n_1,\ldots,n_k)\in{\mathbb N}^k$,
is also primitive recursive in $f$.
\end{definition}

In the above definition, the ``ordinary'' primitive recursive functions are those which do not depend on the function $f$.
Roughly speaking, from the point of view of computability theory, a primitive recursive function can be computed by a computer program
such that for every loop in the program the number of iterations can be bounded from above before entering the loop.

\begin{definition}\label{recursive function in f}
The function $g:{\mathbb N}^k\to \mathbb N$ is {\it recursive} if in addition to the constructions in the definition of a primitive recursive function
one uses also the following.

(4) {\it Minimization operator} $\mu$: If $h(m,n_1,\ldots,n_k):{\mathbb N}^{k+1}\to \mathbb N$ is partially defined
(i.e. defined on a subset of ${\mathbb N}^{k+1}$),
then the function $\mu(h):{\mathbb N}^k\to \mathbb N$ is defined by $\mu(h)(n_1,\ldots,n_k)=m$ if $h(i,n_1,\ldots,n_k)>0$ for $i=0,1,\ldots,m-1$ and
$h(m,n_1,\ldots,n_k)=0$. If $h(i,n_1,\ldots,n_k)>0$ for all $i\in\mathbb N$ or if $h(i,n_1,\ldots,n_k)$
is not defined before reaching some $m$ with $h(m,n_1,\ldots,n_k)=0$,
then the search for $m$ never terminates, and $\mu(h)(n_1,\ldots,n_k)$ is not defined for the argument $(n_1,\ldots,n_k)$.
\end{definition}

We shall restate the formalization of Seidenberg \cite{Sa} introduced originally for the ideals of the polynomial algebra ${\mathbb Q}[x_1,\ldots,x_k]$.

\begin{problem}\label{approach of Seidenberg}
Given a function $f: {\mathbb N}\to{\mathbb N}$, what is the maximal $p_f(i)\in\mathbb N$ with the property:
There exists a set
\[
I_f=\{q^{(i)}=(q^{(i)}_1,\ldots,q^{(i)}_k)\mid i=1,\ldots,p_f(i)\}\subset {\mathbb N}^k
\]
such that $q^{(i)}_1+\cdots+q^{(i)}_k\leq f(i)$ and the elements of $I_f$ are not comparable with respect to the partial order $\prec$.
\end{problem}

Seidenberg showed that there exists a bound $p_f^{(k)}$ depending on $f$ and $k$ only, which is recursive in $f$ for a fixed $k$.
This result was improved by Moreno-Soc\'{\i}as \cite{MS2}.

\begin{theorem}\label{result of Moreno-Socias}
In the notation of Problem \ref{approach of Seidenberg}
for every $k$ there is a primitive recursive function $p_f^{(k)}:{\mathbb N}\to\mathbb N$ in $f$,
but there is no bound $p_f$ which is primitive recursive in $f$ in general.
\end{theorem}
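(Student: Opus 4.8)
The plan is to read $p_f^{(k)}$ as the maximal length of a \emph{controlled bad sequence} in $({\mathbb N}^k,\preceq)$: a list $q^{(1)},q^{(2)},\ldots$ of elements of ${\mathbb N}^k$ in which no earlier term is $\preceq$ a later one (the situation of Problem \ref{approach of Seidenberg}), subject to $|q^{(i)}|:=q^{(i)}_1+\cdots+q^{(i)}_k\le f(i)$. By Dickson's Lemma (Lemma \ref{proof of Hilbert}) every such list is finite, so $p_f^{(k)}$ is well defined and the content of the theorem is purely quantitative. For the positive half I would argue by induction on $k$, after first replacing $f$ by its running maximum $i\mapsto\max\{f(1),\ldots,f(i)\}$, which is non-decreasing, is primitive recursive in $f$, and only weakens the constraint, so that an upper bound obtained for it bounds $p_f^{(k)}$ as well. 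For $k=1$ a bad sequence in $({\mathbb N},\le)$ is strictly decreasing and so has length at most $f(1)+1$. For the step one fixes the first term $q^{(1)}$ (of norm at most $f(1)$); badness forces each later $q^{(i)}$ to have $q^{(i)}_j<q^{(1)}_j$ for some coordinate $j$, and sorting the later terms by the least such $j$ and the value $q^{(i)}_j$ produces at most $|q^{(1)}|\le f(1)$ classes, each of which --- after deleting the coordinate $j$ --- is a bad sequence in ${\mathbb N}^{k-1}$.

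The delicate point of this step is that the $(k{-}1)$-dimensional sub-sequences are interleaved inside the original list, so the $m$-th term of a class sits at an \emph{a priori} uncontrolled position and is therefore restricted by $f$ at a large, unknown argument. I would deal with this by the standard amortized bookkeeping: the total length is at most $1$ plus the sum of the lengths of the per-class sub-sequences, and handing each sub-instance a position-shifted copy of the control function turns the estimate into a recursion whose net effect, iterated through the $k$ stages, is to apply $f$ about $f(1)$ times and to nest that construction $k$-fold. For each \emph{fixed} $k$ this stays inside the functions primitive recursive in $f$, which yields a primitive-recursive-in-$f$ upper bound for $p_f^{(k)}$; and once such a bound $N$ is available, $p_f^{(k)}$ itself is primitive recursive in $f$, since it equals the largest $L\le N$ for which some list of length $L$ with $i$-th entry of norm $\le f(i)$ is bad, and this is a bounded search (the entries ranging over effectively listable finite sets) over a primitive-recursive-in-$f$ predicate.

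For the negative half one needs a matching \emph{lower} bound, witnessing that the $k$-fold nesting is unavoidable. Fix one very tame control, say $f(i)=i+1$, and construct, for each $k$, a controlled bad sequence in ${\mathbb N}^k$ of enormous length by a cascade of nested ``countdowns'': the last coordinate is decreased one unit at a time, each decrease of the next-to-last coordinate resets the last one to roughly the number of steps already used, and this pattern is stacked through all $k$ coordinates. As a function of $k$, the resulting $p_f^{(k)}$ then grows roughly like the Ackermann function, which is recursive but is not dominated by any primitive recursive function. Since this $f$ is itself primitive recursive, any functional primitive recursive in $f$ collapses, on the input $k$, to an ordinary primitive recursive function; no such function can dominate $k\mapsto p_f^{(k)}$, and hence there is no bound for $p_f^{(k)}$ that is primitive recursive in $f$ uniformly in $k$.

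The two places where real work is needed --- and which I would take over from Moreno-Soc\'{\i}as \cite{MS2} --- are making the amortized recursion of the inductive step precise enough to certify primitive recursiveness (and nothing stronger) for each fixed $k$, and checking that the nested-countdown construction genuinely attains Ackermannian length under a fixed, slowly growing control. Everything else --- the finiteness coming from Dickson's Lemma, and the closure of the functions primitive recursive in $f$ under the substitutions, primitive recursions and bounded searches used above --- is routine given Definitions \ref{primitive recursive function in f} and \ref{recursive function in f}.
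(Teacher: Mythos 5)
The paper itself gives no proof of this theorem: it appears as a survey statement attributed to Moreno-Soc\'{\i}as \cite{MS2}, with the concrete Ackermannian lower-bound example from \cite{MS1} recorded in the following Theorem \ref{Ackermann function}, likewise without proof. There is therefore no argument in the paper to compare against; what you have written is a free-standing reconstruction, and it follows the standard line of the cited references.

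Two remarks on the sketch itself. First, Problem \ref{approach of Seidenberg} as stated asks for an \emph{antichain} (all of $I_f$ pairwise incomparable under $\preceq$), whereas your ``bad sequence'' formulation forbids only $q^{(i)}\preceq q^{(j)}$ with $i<j$; the two extremal lengths are not literally equal, but your upper-bound argument bounds both and your nested-countdown witness is in fact an antichain, so the change of reading is harmless. Second, and more to the point, the ``position-shifted control'' device in your inductive step is precisely where the content of \cite{MS2} lives: because the $(k{-}1)$-dimensional subsequences are interleaved, the shift handed to one class depends on the a priori unknown lengths of the others, so the bookkeeping must be organized as a simultaneous recursion over the remaining total budget rather than class by class, and one must verify that this still closes inside the functions primitive recursive in $f$ for each fixed $k$. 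You correctly identify this, together with the check that the cascade of countdowns really reaches Ackermannian length under the control $f(i)=i+1$, as the two pieces you would import from \cite{MS1,MS2}. With those imports the outline is a correct proof; on its own it is a sketch with the hard parts deferred to the references --- which is exactly what the paper does, only more tersely.
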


For each $d\in{\mathbb N}\setminus\{0\}$ Moreno-Soc\'{\i}as \cite{MS1}
constructed an example for the primitive recursive function $f_d(n)=d+n$, $n\in\mathbb N$,
with the property that
the bound $p_{f_d}$ is expressed in terms of the Ackermann function $a(k,n):{\mathbb N}^2\to\mathbb N$ \cite{Ack} defined by
\[
a(0,n)=n+1,\quad a(k+1,0)=a(k,1),\quad a(k+1,n+1)=a(k,a(k+1,n)).
\]
It is known that $a(k,n)$ is recursive and grows faster than any primitive recursive function.

\begin{theorem}\label{Ackermann function}
In the notation of Problem \ref{approach of Seidenberg} let $d\in {\mathbb N}\setminus \{0\}$ and let $f_d(n)=d+n$, $n\in\mathbb N$.
Then there exists a set
\[
I_{f_d}=\{q^{(i)}=(q^{(i)}_1,\ldots,q^{(i)}_k)\mid i=d,d+1,\ldots,p\}\subset {\mathbb N}^k
\]
of noncomparable $k$-tuples such that $q^{(i)}_1+\cdots+q^{(i)}_k=i$ and $p$ is equal to
$a(k,d-1)-1$, where $a(k,n)$ is the Ackermann function.
\end{theorem}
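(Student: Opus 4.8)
The plan is to prove the statement by induction on $k$, uniformly in all $d\ge 1$. The base case $k=1$ is immediate: from the defining recursion one gets $a(1,n)=n+2$, so $p=a(1,d-1)-1=d$, and the one-element set $\{q^{(d)}\}$ with $q^{(d)}=(d)$ is (vacuously) a noncomparable family of the required form. For the inductive step I would assume the statement for some $k\ge 1$ and for all starting values, and build the required family in $\mathbb N^{k+1}$, which I identify with $\mathbb N\times\mathbb N^k$, writing an element as $(c,q)$ with $c\in\mathbb N$ and $q\in\mathbb N^k$.

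The family will be a union of $d+1$ \emph{layers}: layer $j$ (for $j=1,\dots,d+1$) has all its elements with first coordinate $c_j=d+1-j$, so $c_1=d>c_2>\dots>c_{d+1}=0$. Layer $1$ is the single element $(d,0,\dots,0)$. For $j\ge 2$, layer $j$ is $\{c_j\}\times A_j$, where $A_j\subset\mathbb N^k$ is the antichain supplied by the induction hypothesis with starting value $\alpha_j$, so the coordinate sums occurring in $A_j$ are exactly $\alpha_j,\alpha_j+1,\dots,\beta_j$ with $\beta_j=a(k,\alpha_j-1)-1$. The starting values are chosen recursively by $\alpha_1=\beta_1=0$ and $\alpha_j=\beta_{j-1}+2$ for $j\ge 2$; since then $\alpha_j\ge 2$, the induction hypothesis applies. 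Two verifications remain. The first is arithmetic bookkeeping: layer $j$ contributes exactly the coordinate sums in $[c_j+\alpha_j,\,c_j+\beta_j]$, and since $c_{j+1}+\alpha_{j+1}=(c_j-1)+(\beta_j+2)=(c_j+\beta_j)+1$, consecutive layers contribute consecutive non-overlapping intervals beginning at $c_1+\alpha_1=d$; so the full set of coordinate sums is $[d,\,c_{d+1}+\beta_{d+1}]=[d,\beta_{d+1}]$, each value attained exactly once. Substituting $\gamma_j=\beta_j+1$ turns the recursion into $\gamma_1=1$, $\gamma_j=a(k,\gamma_{j-1})$, so $\gamma_{d+1}$ is the $d$-fold iterate of $a(k,\cdot)$ at $1$; by the definition of the Ackermann function this equals $a(k+1,d-1)$, whence $\beta_{d+1}=a(k+1,d-1)-1=p$ as required.

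The second verification is that the union of the layers is an antichain. Two distinct elements of the same layer have equal first coordinate and $\mathbb N^k$-parts lying in the antichain $A_j$, hence are incomparable. For elements $(c_i,q)$ and $(c_j,q')$ in distinct layers with $i<j$ one has $c_i>c_j$, so $(c_i,q)\preceq(c_j,q')$ is impossible, while $(c_j,q')\preceq(c_i,q)$ would force $q'\preceq q$ and therefore $q_1'+\dots+q_k'\le q_1+\dots+q_k$; but $q_1'+\dots+q_k'\ge\alpha_j$ and $q_1+\dots+q_k\le\beta_i$, and $\alpha_j>\beta_i$ because the sequence $(\alpha_\ell)_{\ell\ge 2}$ is increasing (as $\alpha_{\ell+1}=a(k,\alpha_\ell-1)+1>\alpha_\ell$, using the elementary fact $a(k,n)>n$), so $\alpha_j\ge\alpha_{i+1}=\beta_i+2>\beta_i$ for $i\ge 1$. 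Thus $q'\not\preceq q$, and the set is a noncomparable family indexed by its coordinate sums $d,d+1,\dots,p$, which closes the induction. The one genuinely delicate point — and the thing that actually requires thought — is the choice $\alpha_j=\beta_{j-1}+2$ of the offsets: it is exactly the smallest value that still lets the first coordinate drop by one at every step while keeping the $\mathbb N^k$-coordinate-sums of later layers strictly above those of earlier ones, and it is this borderline choice that makes the total length telescope, through the Ackermann recursion, to $a(k+1,d-1)-d$. The two invocations of $a(k,n)>n$ are themselves a routine induction straight from the definition of $a$.
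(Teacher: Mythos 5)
Your proof is correct, and it supplies something the paper does not: the paper states Theorem~\ref{Ackermann function} and merely cites Moreno-Soc\'{\i}as \cite{MS1} for it, giving no argument. I checked your construction in detail. The base case is right, since the recursion gives $a(1,n)=n+2$ and hence $p=d$ for $k=1$. In the inductive step the layered family is a genuine antichain: within a layer by the induction hypothesis, and across layers because the first coordinate strictly decreases while the $\mathbb{N}^k$-coordinate sums strictly increase, the latter relying on the monotonicity $\alpha_{\ell+1}=a(k,\alpha_\ell-1)+1>\alpha_\ell$, which follows from the elementary fact $a(k,n)>n$ (itself a routine double induction from the paper's defining recursion, as you say). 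The bookkeeping of coordinate sums is correct: each layer $j$ covers exactly the interval $[c_j+\alpha_j,\,c_j+\beta_j]$, consecutive layers abut because $c_{j+1}+\alpha_{j+1}=c_j+\beta_j+1$, and the union is $[d,\beta_{d+1}]$. Finally, the substitution $\gamma_j=\beta_j+1$ turns the recursion into $\gamma_1=1$, $\gamma_j=a(k,\gamma_{j-1})$, and since $a(k+1,0)=a(k,1)$ and $a(k+1,n+1)=a(k,a(k+1,n))$, unrolling gives $\gamma_{d+1}=a(k+1,d-1)$ exactly, i.e.\ $p=\beta_{d+1}=a(k+1,d-1)-1$. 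The one point worth making explicit when writing this up is the borderline choice $\alpha_j=\beta_{j-1}+2$ (rather than $+1$): the extra $+1$ is what compensates for the drop of $1$ in the first coordinate and keeps the $\mathbb{N}^k$-sums of later layers strictly above those of earlier ones, which is what the cross-layer incomparability hinges on.
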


See also the paper by Aschenbrenner and Pong \cite{APo} for another approach to the complexity of the problems discussed above.

\section{From the homogeneous case to the solution\\
of general linear Diophantine constraints}

Let us consider the general system of linear Diophantine equations and inequalities (\ref{arbitrary transport system}).
There is a standard way to bring the solution of (\ref{arbitrary transport system})
to the solution of a homogeneous system of linear Diophantine equations. We introduce new unknowns $y,y_{l+1},\ldots,y_m$
and replace the system (\ref{arbitrary transport system}) with the system
\begin{equation}\label{from arbitrary to homogeneous system}
\begin{array}{|rcrcrcll}
a_{11} x_1& + \cdots +& a_{1k} x_k &+& a_1y&&&= 0 \\
&\cdots&&&&&&\\
a_{l1} x_1& + \cdots + &a_{lk} x_k &+& a_ly&&&= 0 \\
a_{l+1,1} x_1& + \cdots +& a_{l+1,k} x_k&+& a_{l+1}y&-&y_{l+1}&=0 \\
&\cdots&&&&&&\\
a_{m1} x_1& + \cdots +& a_{mk} x_k&+&  a_my&-&y_m&=0. \\
\end{array}
\end{equation}

The following easy theorem shows how to reduce the solution of a general system to a homogeneous one.

\begin{theorem}\label{reduction from arbitrary to homogeneous}
Let
\begin{equation}\label{minimal solutions of the new system}
\{q^{(i)}=(r_1^{(i)},\ldots,r_k^{(i)},s^{(i)},s_{l+1}^{(i)},\ldots,s_m^{(i)})\mid i=1,\ldots,n\}\subset {\mathbb N}^{k+1+m-l}
\end{equation}
be the set of minimal solutions of the homogeneous system of equations (\ref{from arbitrary to homogeneous system}) and let
$s^{(i)}=1$ for $i=1,\ldots,c$, $s^{(i)}=0$ for $i=c+1,\ldots,d$ and $s^{(i)}>1$ for $i=d+1,\ldots,n$. Then the set of all
solutions of the system (\ref{arbitrary transport system}) are of the form
\[
q=q^{(i)}+t_{c+1}q^{(i)}+\cdots+t_dq^{(d)},\quad i=1,\ldots,c,\quad t_{c+1},\ldots,t_d\in{\mathbb N}.
\]
\end{theorem}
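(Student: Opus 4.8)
The plan is to establish a bijection between solutions of the original system \eqref{arbitrary transport system} in nonnegative integers and certain nonnegative integer combinations of the minimal solutions of the homogeneous system \eqref{from arbitrary to homogeneous system}. First I would observe the obvious correspondence at the level of single tuples: a tuple $(x_1,\dots,x_k)\in\mathbb{N}^k$ solves \eqref{arbitrary transport system} if and only if, setting $y=1$ and $y_i = a_{i1}x_1+\cdots+a_{ik}x_k + a_i \ge 0$ for $i=l+1,\dots,m$, the tuple $(x_1,\dots,x_k,y,y_{l+1},\dots,y_m)$ is a solution of \eqref{from arbitrary to homogeneous system} with $y$-coordinate equal to $1$. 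Conversely, any solution of \eqref{from arbitrary to homogeneous system} whose $y$-coordinate equals $1$ restricts, in its first $k$ coordinates, to a solution of \eqref{arbitrary transport system}. So the content of the theorem is entirely about how solutions of the homogeneous system with $y=1$ decompose in terms of the minimal solutions.

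Next I would invoke the structure established in Section~2: by Theorem~\ref{theorem of Gordan} the homogeneous system \eqref{from arbitrary to homogeneous system} has finitely many minimal solutions $q^{(1)},\dots,q^{(n)}$, and every solution is a sum (with repetition) of minimal ones, i.e. a nonnegative integer combination $\sum_{i=1}^n t_i q^{(i)}$ with $t_i\in\mathbb N$. Now track the $y$-coordinate under such a combination: it equals $\sum_{i=1}^n t_i s^{(i)}$. Using the partition of the indices into $s^{(i)}=1$ (for $i\le c$), $s^{(i)}=0$ (for $c<i\le d$), and $s^{(i)}>1$ (for $i>d$), a combination has $y$-coordinate equal to $1$ precisely when $\sum_{i>d} t_i (\cdots) = 0$ forces $t_i=0$ for all $i>d$ (since $s^{(i)}\ge 2$ there), and $\sum_{i\le c} t_i = 1$ forces exactly one index $i_0\le c$ with $t_{i_0}=1$ and the remaining $t_i=0$ for $i\le c$, while the $t_i$ for $c<i\le d$ are arbitrary in $\mathbb N$. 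This yields exactly the asserted parametric form $q = q^{(i_0)} + t_{c+1}q^{(c+1)} + \cdots + t_d q^{(d)}$, and, restricting to the first $k$ coordinates, the claimed description of the solution set of \eqref{arbitrary transport system}. (I note the statement's indices slightly abuse notation — the free summands should run $q^{(c+1)},\dots,q^{(d)}$ — and I would silently use the corrected form.)

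The one genuinely non-routine point to argue carefully is that every solution of \eqref{from arbitrary to homogeneous system} with $y=1$ actually \emph{is} a nonnegative combination of the $q^{(i)}$ of the special shape above, as opposed to merely being dominated by some minimal solution. For this I would use the decomposition argument from the proof of Theorem~\ref{theorem of Gordan}: any solution $q$ with $q\ne 0$ has some minimal solution $q^{(i)}\preceq q$ with respect to the order \eqref{partial order}, and then $q - q^{(i)}$ is again a nonnegative solution, so by induction on $q_1+\cdots$ (the total weight) one writes $q = \sum t_i q^{(i)}$. The subtlety is that the $y$-coordinate is additive and nonnegative along this process, so once it has been "used up" to reach the value $1$, no further minimal solution with positive $y$-coordinate can be subtracted; this is exactly what pins down the combination to the stated form. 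Everything else — the equivalence of the two systems, the bookkeeping on coordinates — is straightforward verification.
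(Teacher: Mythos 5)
Your proof is correct and follows essentially the same route as the paper's: establish the equivalence between solutions of the original system and solutions of the homogeneous system with $y$-coordinate $1$, write an arbitrary such solution as a nonnegative integer combination $\sum t_i q^{(i)}$ of minimal solutions (a fact the paper records just before Theorem~\ref{theorem of Gordan}), and then read off the constraints on the $t_i$ from the equation $1=\sum_i t_i s^{(i)}$, forcing $t_i=0$ for $i>d$ and exactly one unit coefficient among $i\le c$. Your worry in the last paragraph is already discharged by the paper's observation preceding Theorem~\ref{theorem of Gordan}, and you are right that the displayed formula in the statement has a typo (the middle term should be $t_{c+1}q^{(c+1)}$, not $t_{c+1}q^{(i)}$).
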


\begin{proof}
If $q=(r_1,\ldots,r_k,s,s_{l+1},\ldots,s_m)$ is a solution of (\ref{from arbitrary to homogeneous system}), then
$s_{l+1},\ldots,s_m\geq 0$ and the solutions $(r_1,\ldots,r_k)$ of (\ref{arbitrary transport system}) are obtained from
solutions $q$ with $s=1$. Every $q$ is a linear combination of the minimal solutions from (\ref{minimal solutions of the new system}).
If $q$ has the form $\displaystyle q=\sum_{i=1}^nt_iq^{(i)}$,
we obtain that $1=s=t_1+\cdots+t_c+t_{d+1}s^{(d+1)}+\cdots+t_ns^{(n)}$. Hence
$t_1+\cdots+t_c=1$ and $t_{d+1}=\cdots=t_n=0$ because $t_{d+1},\ldots,t_n>1$.
\end{proof}

\begin{remark}
Let some of the inequalities in the system (\ref{arbitrary transport system}), e.g.
\[
a_{m1} x_1 + \cdots + a_{mk} x_k+  a_m>0
\]
be strict. Then in the system (\ref{from arbitrary to homogeneous system}) we replace it by
\[
a_{m1} x_1 + \cdots + a_{mk} x_k+  (a_m+1)y-y_m=0.
\]
\end{remark}

\begin{example}\label{solution of general system}
Let us modify the system (\ref{example homogeneous system}) from Example \ref{solution of system by Gordan} into the system
\begin{equation}\label{example general system}
\begin{array}{|rcrrrl}
x_1&+&2x_2&-x_3&-1&=0\\
2x_1&+&3x_2&-2x_3&-1&\geq 0.\\
\end{array}
\end{equation}
Following the proof of Theorem \ref{reduction from arbitrary to homogeneous} we have to consider the system
\begin{equation}\label{example from general to homogeneous}
\begin{array}{|rcrrrrl}
x_1&+&2x_2&-x_3&-y&&=0\\
2x_1&+&3x_2&-2x_3&-y&-y_2&=0.\\
\end{array}
\end{equation}
The minimal solutions $q=(r_1,r_2,r_3,s)$ of the first equation of (\ref{example from general to homogeneous}) are the same
as the minimal solutions (\ref{fisrt minimal solutions}) of the first equation of (\ref{example homogeneous system}).
Since $s=2>1$ in the solution $q^{(1)}=(0,1,0,2)$, $s=1$ in $q^{(2)}$ and $q^{(4)}$, and $s=0$ in $q^{(3)}$ and $q^{(5)}$,
we obtain that all solutions are
\[
q'=q^{(2)}+t_3q^{(3)}+t_5q^{(5)}=(t_5,1+t_3,1+2t_3+t_5,1),
\]
\[
q''=q^{(4)}+t_3q^{(3)}+t_5q^{(5)}=(1+t_5,t_3,2t_3+t_5,1),
\]
$t_3,t_5\in{\mathbb N}$. Substituting $q'$ in the second equation of (\ref{example from general to homogeneous}) we obtain
\[
2t_5+3(1+t_3)-2(1+2t_3+t_5)-1-y_2=-t_3-y_2=0.
\]
Hence $t_3=y_2=0$ and we obtain the solutions $(r_1,r_2,r_3)=(1+t_5,0,t_5)$ of (\ref{example general system}).
Similarly, starting with $q''$, the second equation of (\ref{example from general to homogeneous}) gives
\[
2(1+t_5)+3t_3-2(2t_3+t_5)-1-y_2=1-t_3-y_2=0.
\]
We obtain two solutions $(t_3,y_2)=(0,1)$ and $(t_3,y_2)=(1,0)$, which give the solutions
$(r_1,r_2,r_3)=(1+t_5,0,t_5)$ and $(r_1,r_2,r_3)=(1+t_5,1,2+t_5)$ of (\ref{example general system}).
\end{example}

There are many methods based on different ideas for solving systems of linear Diophantine equations and inequalities.
See for example \cite{AbM, AC, BKo, C, DT, FT, Kr, Pa, PCVT, TF1, TF2} and the bibliography there.
See also \cite{Ko1, Ko2} for relations with mathematical logic and the theory of formal grammars.

\section{The method of Elliott}

In this section we shall explain in detail the method of Elliott from \cite{E}.
Originally, it was developed for systems of homogeneous linear Diophantine equations.
But for our applications we shall restate the method for systems of linear Diophantine inequalities.
We fix arbitrary integers
\[
a_{ij},a_i\in\mathbb{Z},\quad i=1,\ldots,m,j=1,\ldots,k,
\]
a system of Diophantine inequalities
\begin{equation}\label{arbitrary system}
\begin{array}{|cl}
a_{11} x_1 + \cdots + a_{1k} x_k + a_1\geq 0 \\
\cdots&\\
a_{m1} x_1 + \cdots + a_{mk} x_k + a_m\geq 0, \\
\end{array}
\end{equation}
and consider the set of solutions of the system (\ref{arbitrary system}) in nonnegative integers
\[
S=\{s=(p_1,\ldots,p_k)\in\mathbb{N}^k\mid s\text{ is a solution of the system}\}.
\]

\subsection{The first idea of Elliott.}
A usual way to describe a set $A\subset \mathbb{R}^k$
is in terms of its {\it characteristic} (or {\it indicator}) {\it function}
$\text{ch}_{f_A}:\mathbb{R}^k\rightarrow \{0,1\} $ defined by
\[
\text{ch}_{f_A}(p_1,\ldots,p_k) =
\begin{cases}
1,(p_1,\ldots,p_k) \in A\\
0,(p_1,\ldots,p_k) \notin A.\\
\end{cases}
\]
\begin{definition}
Let $P\subset \mathbb{N}^k$. By analogy with the characteristic function of $P$ we call the formal power series
\[
\chi_P(t_1,\ldots,t_k)=\sum_{p\in P}t_1^{p_1}\cdots t_k^{p_k},\quad p=(p_1,\ldots,p_k),
\]
the {\it characteristic series} of $P$.
\end{definition}
When $P\subset \mathbb{N}^k$ is the set of solutions of a system of linear Diophantine equations Elliott suggests to
call $\chi_P(t_1,\ldots,t_k)$ the {\it generating function} of the set of solutions.

\subsection{The second idea of Elliott.}
To find the characteristic series of a set of solutions $S\subset \mathbb{N}^k$ of a system of homogeneous linear Diophantine equations
Elliott involves Laurent series.

\begin{definition}\label{characteristic series of function}
Let $P\subset \mathbb{N}^k$, let
\[
f(t_1,\ldots,t_k)=\sum_{p\in P}\alpha_pt_1^{p_1}\cdots t_k^{p_k}\in\mathbb{C}[[t_1,\ldots,t_k]],\quad p=(p_1,\ldots,p_k),\alpha_p\in\mathbb{C},
\]
be a formal power series, and let $S$ be a subset of $P$. We call the formal power series
\[
\chi_S(f;t_1,\ldots,t_k)=\sum_{p\in S}\alpha_pt_1^{p_1}\cdots t_k^{p_k},
\]
the {\it characteristic series of $f$ with respect to the set $S$}.
\end{definition}

The next lemma is one of the key moments in the approach of Elliott. Its proof is obvious.

\begin{lemma}\label{Laurent series for equations}
Let $P\subset \mathbb{N}^k$, let
\[
f(t_1,\ldots,t_k)=\sum_{p\in P}\alpha_pt_1^{p_1}\cdots t_k^{p_k}\in\mathbb{C}[[t_1,\ldots,t_k]],\quad p=(p_1,\ldots,p_k),\alpha_p\in\mathbb{C},
\]
be a formal power series, and let $S\subset P$ be the set of solutions in $P$ of the Diophantine equation
\[
a_1x_1+\cdots+a_kx_k=0,\quad a_i\in\mathbb{Z}.
\]
If the Laurent series
\[
\xi_S(t_1,\ldots,t_k,z)=f(t_1z^{a_1},\ldots,t_kz^{a_k})=\sum_{n=-\infty}^{\infty}\sum_{p\in S}\alpha_pt_1^{p_1}\cdots t_k^{p_k}z^n,
\]
$p=(p_1,\ldots,p_k)$, $n=a_1p_1+\cdots+a_kp_k$, has the form
\[
\xi_S(t_1,\ldots,t_k,z)=\sum_{n=-\infty}^{\infty}f_n(t_1,\ldots,t_k)z^n,\quad
f_n(t_1,\ldots,t_k)\in \mathbb{C}[[t_1,\ldots,t_k]],
\]
then
\[
\chi_S(f;t_1,\ldots,t_k)=\sum_{p\in S}\alpha_pt_1^{p_1}\cdots t_k^{p_k}=f_0(t_1,\ldots,t_k).
\]
\end{lemma}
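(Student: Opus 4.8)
The plan is to verify directly that substituting $t_j \mapsto t_j z^{a_j}$ into the power series $f$ reindexes its monomials by their "$z$-degree" $a_1p_1 + \cdots + a_kp_k$, and that extracting the $z^0$-component picks out precisely the monomials indexed by solutions of the equation $a_1x_1 + \cdots + a_kx_k = 0$. The statement is essentially a bookkeeping identity, so the proof is a short chain of equalities with careful attention to convergence of the formal manipulations.

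First I would write
\[
f(t_1z^{a_1},\ldots,t_kz^{a_k}) = \sum_{p\in P}\alpha_p (t_1z^{a_1})^{p_1}\cdots(t_kz^{a_k})^{p_k}
= \sum_{p\in P}\alpha_p t_1^{p_1}\cdots t_k^{p_k} z^{a_1p_1+\cdots+a_kp_k},
\]
which is a well-defined element of $\mathbb{C}[[t_1,\ldots,t_k]][z,z^{-1}]$ as a formal Laurent series in $z$ with coefficients in $\mathbb{C}[[t_1,\ldots,t_k]]$: for each fixed multidegree $(p_1,\ldots,p_k)$ only one power of $z$ occurs, so collecting terms with a common power $z^n$ yields, for each $n\in\mathbb{Z}$,
\[
f_n(t_1,\ldots,t_k) = \sum_{\substack{p\in P\\ a_1p_1+\cdots+a_kp_k = n}}\alpha_p t_1^{p_1}\cdots t_k^{p_k}.
\]
This is a genuine power series (the sum over each fixed total $t$-degree is finite since $P\subset\mathbb{N}^k$), so the expansion $\xi_S = \sum_n f_n z^n$ asserted in the hypothesis is justified.

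Then I would simply specialize to $n=0$: by the displayed formula for $f_n$,
\[
f_0(t_1,\ldots,t_k) = \sum_{\substack{p\in P\\ a_1p_1+\cdots+a_kp_k = 0}}\alpha_p t_1^{p_1}\cdots t_k^{p_k}
= \sum_{p\in S}\alpha_p t_1^{p_1}\cdots t_k^{p_k},
\]
where the last equality is the definition of $S$ as the set of $p\in P$ with $a_1p_1+\cdots+a_kp_k = 0$. Comparing with Definition \ref{characteristic series of function}, this is exactly $\chi_S(f;t_1,\ldots,t_k)$, which completes the argument.

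There is no real obstacle here; the only point requiring a word of care is the legitimacy of rearranging the double sum, i.e. checking that $f(t_1z^{a_1},\ldots,t_kz^{a_k})$ is a well-defined formal object in the ring $\mathbb{C}[[t_1,\ldots,t_k]][z,z^{-1}]$ and that grouping monomials by their $z$-exponent produces power series $f_n$ rather than Laurent series in the $t_i$. Since some $a_i$ may be negative, a monomial $t_1^{p_1}\cdots t_k^{p_k}$ can acquire a negative power of $z$, so the result genuinely lives among Laurent series in $z$; but the $t_i$-exponents $p_i$ stay nonnegative throughout, which is why each $f_n$ lies in $\mathbb{C}[[t_1,\ldots,t_k]]$. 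Once this is observed, the identity $\chi_S(f;t_1,\ldots,t_k) = f_0(t_1,\ldots,t_k)$ is immediate, as the authors indicate by calling the proof obvious.
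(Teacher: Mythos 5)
Your proof is correct and is exactly the direct verification the authors have in mind when they call the proof obvious: substitute $t_j\mapsto t_jz^{a_j}$, note that each monomial $\alpha_p t_1^{p_1}\cdots t_k^{p_k}$ then acquires the single factor $z^{a_1p_1+\cdots+a_kp_k}$, group by $z$-exponent, and observe that the $z^0$-coefficient collects precisely the monomials indexed by $p\in S$. The extra remarks on why each $f_n$ lies in $\mathbb{C}[[t_1,\ldots,t_k]]$ (the $t$-exponents stay nonnegative) and why the rearrangement is legitimate are a sound and appropriate elaboration of the same argument.
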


The next lemma is a slight generalization of the original approach of Elliott. Its proof is also obvious.

\begin{lemma}\label{Laurent series}
Let $P\subset \mathbb{N}^k$, let
\[
f(t_1,\ldots,t_k)=\sum_{p\in P}\alpha_pt_1^{p_1}\cdots t_k^{p_k}\in\mathbb{C}[[t_1,\ldots,t_k]],\quad p=(p_1,\ldots,p_k),\alpha_p\in\mathbb{C},
\]
be a formal power series, and let $S$ be  the solutions in $P$ of the Diophantine inequality
\[
a_1x_1+\cdots+a_kx_k+a\geq 0,\quad a_i,a\in\mathbb{Z}.
\]
If
\[
\xi_S(t_1,\ldots,t_k,z)=z^af(t_1z^{a_1},\ldots,t_kz^{a_k})
\]
\[
=\sum_{n=-\infty}^{\infty}\sum_{p\in S}\alpha_pt_1^{p_1}\cdots t_k^{p_k}z^n
=\sum_{n=-\infty}^{\infty}f_n(t_1,\ldots,t_k)z^n,
\]
$f_n(t_1,\ldots,t_k)\in \mathbb{C}[[t_1,\ldots,t_k]]$, $n=a_1p_1+\cdots+a_kp_k+a$, then
\[
\chi_{S}(f;t_1,\ldots,t_k)=\sum_{(p_1,\ldots,p_k)\in S}\alpha_pt_1^{p_1}\cdots t_k^{p_k}=\sum_{n=0}^{\infty}f_n(t_1,\ldots,t_k).
\]
\end{lemma}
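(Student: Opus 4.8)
The plan is to unwind the three definitions at play---the substitution producing $\xi_S$, the decomposition $\xi_S=\sum_n f_n z^n$, and the characteristic series $\chi_S(f;\cdot)$ of Definition~\ref{characteristic series of function}---and then to check that ``collect the coefficients of the nonnegative powers of $z$ and add them'' picks out exactly the monomials of $f$ indexed by $S$. This mirrors the proof of Lemma~\ref{Laurent series for equations}, with only two cosmetic changes: the extra global factor $z^a$, and the fact that for an inequality one keeps every slice $f_n$ with $n\ge 0$ instead of the single slice $f_0$.

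First I would substitute the expansion $f=\sum_{p\in P}\alpha_p t_1^{p_1}\cdots t_k^{p_k}$ into the definition of $\xi_S$ and distribute the powers of $z$:
\[
\xi_S(t_1,\ldots,t_k,z)=z^a\sum_{p\in P}\alpha_p(t_1z^{a_1})^{p_1}\cdots(t_kz^{a_k})^{p_k}
=\sum_{p\in P}\alpha_p\,t_1^{p_1}\cdots t_k^{p_k}\,z^{\,a_1p_1+\cdots+a_kp_k+a}.
\]
The observation to record is that each monomial $t_1^{p_1}\cdots t_k^{p_k}$ of $f$ is sent to one and only one power of $z$, namely $z^{n(p)}$ with $n(p)=a_1p_1+\cdots+a_kp_k+a$, and distinct $p$ give distinct $t$-monomials; hence grouping by the power of $z$ is unambiguous and produces, for each $n\in\mathbb{Z}$, the formal power series
\[
f_n(t_1,\ldots,t_k)=\sum_{\substack{p\in P\\ a_1p_1+\cdots+a_kp_k+a=n}}\alpha_p\,t_1^{p_1}\cdots t_k^{p_k}\in\mathbb{C}[[t_1,\ldots,t_k]],
\]
while $\{\{p\in P:n(p)=n\}\}_{n\in\mathbb{Z}}$ is a partition of $P$. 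Then I would use the definition of $S$ from the statement, $S=\{p\in P:a_1p_1+\cdots+a_kp_k+a\ge 0\}$, and add the slices with $n\ge 0$:
\[
\sum_{n=0}^{\infty}f_n(t_1,\ldots,t_k)=\sum_{\substack{p\in P\\ a_1p_1+\cdots+a_kp_k+a\ge 0}}\alpha_p\,t_1^{p_1}\cdots t_k^{p_k}=\sum_{p\in S}\alpha_p\,t_1^{p_1}\cdots t_k^{p_k}=\chi_S(f;t_1,\ldots,t_k),
\]
which is the asserted identity. (Setting $a=0$ and keeping only $f_0$ recovers Lemma~\ref{Laurent series for equations}.)

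I do not expect a genuine obstacle---as noted in the text the proof is ``obvious''---and the only step deserving a word of care is purely a matter of formalism: since the $a_j$ and $a$ may be negative, $\xi_S$ is in general neither a power series in $z$ nor even a Laurent series with finitely many negative terms, but a two-sided series $\sum_{n\in\mathbb{Z}}f_n z^n$ with coefficients in $\mathbb{C}[[t_1,\ldots,t_k]]$. So before manipulating it I would fix the ambient object to be the $\mathbb{C}$-vector space of such two-sided $z$-series over $\mathbb{C}[[t_1,\ldots,t_k]]$, note that the rewriting $\xi_S=\sum_n f_n z^n$ is legitimate there precisely because each $t$-monomial of $f$ lands in a single $z$-degree, and note likewise that $\sum_{n\ge 0}f_n$ is a well-defined element of $\mathbb{C}[[t_1,\ldots,t_k]]$ since the $f_n$ have pairwise disjoint supports. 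After that the remaining content is just the re-indexing displayed above.
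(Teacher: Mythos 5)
Your proof is correct and is exactly the ``obvious'' argument the paper has in mind when it omits the proof and says it ``is also obvious'': substitute the series for $f$, observe that each monomial $t_1^{p_1}\cdots t_k^{p_k}$ is tagged by the unique $z$-exponent $n(p)=a_1p_1+\cdots+a_kp_k+a$, and sum the slices with $n\ge 0$. Your aside on fixing the ambient object as a two-sided $z$-series with coefficients in $\mathbb{C}[[t_1,\ldots,t_k]]$ is a welcome extra precaution but does not change the approach.
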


Now the problem is how to find $\chi_{S}(f;t_1,\ldots,t_k)$
if $f(t_1,\ldots,t_k)$ is an explicitly given power series which converges to a rational function and we know the set $S$.
Even in simple cases the answer may be not trivial.

\begin{example}\label{invariant theory for free algebras}
Combining the results from \cite[p. 409]{DG} with ideas from \cite{BBDGK}, for the formal power series
\[
f(t_1,t_2)=\frac{t_1-t_2}{1-(t_1+t_2)}=(t_1-t_2)\sum_{n=0}^{\infty}(t_1+t_2)^n,\quad S=\{(p_1,p_2)\in\mathbb{N}^2\mid p_1\geq p_2\},
\]
we obtain
\[
\chi_S(f;t_1,t_2)=\frac{1-\sqrt{1-4t_1t_2}}{2t_2-(1-\sqrt{1-4t_1t_2})}.
\]
\end{example}

\subsection{The third idea of Elliott.}

The following definition is given by Berele \cite{B2}.

\begin{definition}\label{definition of nice rational function}
A {\it nice rational function} is a rational function
with denominator which is a product of monomials of the form
$(1-t_1^{\alpha_1}\cdots t_k^{\alpha_k})$.
\end{definition}

Nice rational functions appear in many places of mathematics.
For example, the Hilbert series of any finitely generated multigraded commutative algebra is of this form.
The following theorem was proved by Elliott \cite{E}.
The proof also gives an algorithm how to find the characteristic series $\chi_S(t_1,\ldots,t_k)$ of the set $S$ of solutions.

\begin{theorem}\label{theorem of Elliott}
Let
\begin{equation}\label{system of homogeneous Diophantine equations}
\begin{array}{|cl}
a_{11} x_1 + \cdots + a_{1k} x_k=0 \\
\cdots&\\
a_{m1} x_1 + \cdots + a_{mk} x_k=0, \\
\end{array}
\end{equation}
where $a_{ij}\in\mathbb{Z}$, $i=1,\ldots,m$, $j=1,\ldots,k$,
be a system of homogenous linear Diophantine equations. Then
the characteristic series $\chi_S(t_1,\ldots,t_k)$ of the set $S$ of the solutions of (\ref{system of homogeneous Diophantine equations})
in $\mathbb{N}^k$ is a nice rational function.
\end{theorem}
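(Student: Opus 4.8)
The plan is to produce $\chi_S$ by starting from the characteristic series of all of $\mathbb{N}^k$ and imposing the $m$ equations of (\ref{system of homogeneous Diophantine equations}) one after another, each time invoking Lemma \ref{Laurent series for equations} to restrict to the solutions of the next equation. Set $S_0=\mathbb{N}^k$ and let $S_i\subseteq S_{i-1}$ consist of those $s\in S_{i-1}$ with $a_{i1}s_1+\cdots+a_{ik}s_k=0$, so that $S_m=S$. We have $\chi_{S_0}(t_1,\ldots,t_k)=\prod_{j=1}^k(1-t_j)^{-1}$, which is a nice rational function in the sense of Definition \ref{definition of nice rational function}. By Lemma \ref{Laurent series for equations}, applied with $P=S_{i-1}$, $f=\chi_{S_{i-1}}$ and all coefficients equal to $1$, the series $\chi_{S_i}$ equals the coefficient $f_0(t_1,\ldots,t_k)$ in the Laurent expansion $\sum_n f_n(t_1,\ldots,t_k)z^n$ of $\chi_{S_{i-1}}(t_1z^{a_{i1}},\ldots,t_kz^{a_{ik}})$. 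So the theorem will follow by induction once we prove the following claim: if $g(t_1,\ldots,t_k)$ is a nice rational function and $e_1,\ldots,e_k\in\mathbb{Z}$, then the coefficient of $z^0$ in the Laurent expansion of $g(t_1z^{e_1},\ldots,t_kz^{e_k})$ is again a nice rational function in $t_1,\ldots,t_k$.

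To prove the claim I would run Elliott's reduction. After the substitution, $g$ is a quotient of a Laurent polynomial in $t_1,\ldots,t_k,z$ by a product of factors $(1-M)$, where each $M$ is a monomial in $t_1,\ldots,t_k$ and $z^{\pm1}$; call such an expression \emph{admissible}. Its denominator may contain a \emph{conflicting pair}: a factor $(1-\lambda)$ with $\lambda=c(t)z^{p}$, $p\ge1$, and a factor $(1-\mu)$ with $\mu=c'(t)z^{-q}$, $q\ge1$. To remove one such pair, put $L=\operatorname{lcm}(p,q)$ and use the elementary identity $(1-cz^{p})^{-1}=(1+cz^{p}+\cdots+c^{L/p-1}z^{L-p})(1-c^{L/p}z^{L})^{-1}$ together with its analogue for the negative power to replace the two factors by $(1-Cz^{L})$ and $(1-C'z^{-L})$ with $C,C'$ monomials in $t$ only. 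Then apply
\[
\frac{1}{(1-Cz^{L})(1-C'z^{-L})}=\frac{1}{1-CC'}\left(\frac{1}{1-Cz^{L}}+\frac{1}{1-C'z^{-L}}-1\right),
\]
which is valid because $CC'$ carries no power of $z$; this rewrites the expression as a sum of three admissible ones in each of which the pair has been split, the only new denominator factor being the $z$-free monomial factor $(1-CC')$.

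Every such step strictly lowers the number of denominator factors that contain $z$, so after finitely many steps each summand is admissible and has all the $z$-exponents of its denominator of one sign, say nonnegative (the other case being symmetric under $z\mapsto z^{-1}$). Write such a summand as $A(t,z)/\bigl(D_1(t,z)D_2(t)\bigr)$, where $D_1$ is the product of the factors $(1-c_\ell(t)z^{d_\ell})$ with $d_\ell\ge1$ and $D_2$ the $z$-free factors; expanding $D_1^{-1}$ as a power series in $z$, multiplying by the Laurent polynomial $A$, and reading off the coefficient of $z^0$ gives a polynomial in the monomials $c_\ell(t)$ and in $t_1,\ldots,t_k$, so dividing by $D_2(t)$ produces a nice rational function. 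Since a finite sum of nice rational functions is nice, the claim, and with it the theorem, follows.

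The step I expect to be the main obstacle is exactly the one forcing the $\operatorname{lcm}$ preprocessing: when a conflicting pair is merged, the new denominator factor it produces must be free of $z$ and again of the shape $(1-\text{monomial})$, and for arbitrary integer exponents this only happens after the two $z$-powers have been made equal in magnitude. Termination of the reduction also needs an explicit argument — one identifies an integer quantity (here, the number of $z$-dependent denominator factors) that strictly decreases in every summand produced at each step — and one should note that the numerators, which accumulate negative powers of $z$, cause no difficulty since niceness constrains only the denominator.
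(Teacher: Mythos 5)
Your proof is correct, and it follows the paper's overall skeleton (start from $\chi_{\mathbb{N}^k}$, impose the equations one at a time via Lemma \ref{Laurent series for equations}, reduce to the claim that extracting the $z^0$-coefficient of a nice rational function after the substitution $t_j\mapsto t_jz^{e_j}$ yields a nice rational function, and prove that claim with the Elliott identity \eqref{Eli-trick}). But you handle the one nontrivial point — termination of the reduction — by a genuinely different device. The paper applies \eqref{Eli-trick} to a pair $(1-Az^a)(1-Cz^{-c})$ with arbitrary $a,c>0$, so the new factor $1-ACz^{a-c}$ may still depend on $z$, and showing the process halts requires the careful well-ordering argument on pairs of partitions that the paper isolates as Proposition \ref{from Masther Thesis}. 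You instead preprocess with $L=\operatorname{lcm}(p,q)$, using $\frac{1}{1-cz^p}=\frac{1+cz^p+\cdots+c^{L/p-1}z^{L-p}}{1-c^{L/p}z^L}$ and its mirror, so that \eqref{Eli-trick} is only ever invoked with equal exponents $\pm L$; then the newly created denominator factor $1-CC'$ is $z$-free, and the number of $z$-dependent denominator factors strictly drops in each of the three summands, making termination a one-line counting argument. The trade-off is that the LCM blow-up can produce much larger exponents and bulkier (though equally correct) nice-rational-function representations of $\chi_S$, whereas the paper's unmodified Elliott reduction tends to keep exponents small at the cost of a subtler termination proof. Your final extraction step (expanding the one-sided $z$-denominator as a power series, multiplying by the Laurent-polynomial numerator, and reading off the $z^0$ coefficient) and the observation that a finite sum of nice rational functions is nice are both sound.
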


\begin{proof}
We start with the characteristic series of the set $\mathbb{N}^k$ of all points with nonnegative integers coordinates
\[
\chi_{\mathbb{N}^k}(t_1,\ldots,t_k)=\prod_{i=1}^k\frac{1}{1-t_i}=\sum_{p_i\geq 0}t_1^{p_1}\cdots t_k^{p_k}.
\]
Let the first equation of the system be of the form
\begin{equation}\label{one single equation}
a_1x_1+\cdots+a_dx_d-c_{e+1}x_{e+1}-\cdots-c_kx_k=0,\quad a_i> 0, c_j>0, d\leq e.
\end{equation}
Then the Laurent series
\[
\xi_S(t_1,\ldots,t_k,z)=\chi_{\mathbb{N}^k}(t_1z^{a_1},\ldots,t_dz^{a_d},t_{d+1},\ldots,t_e,t_{e+1}z^{-c_{e+1}},\ldots,t_kz^{-c_k})
\]
from Lemma \ref{Laurent series for equations} has the form
\[
\xi_S(t_1,\ldots,t_k,z)=\prod_{i=1}^d\prod_{j=d+1}^e\prod_{m=e+1}^k\frac{1}{(1-t_iz^{a_i})(1-t_j)(1-t_mz^{-c_m})}.
\]
More general, we shall assume that $\xi_S(t_1,\ldots,t_k,z)$ is of the form
\begin{equation}\label{products with positive and negative degrees}
\xi_S(t_1,\ldots,t_k,z)=\prod_{i=1}^d\prod_{j=d+1}^e\prod_{m=e+1}^k\frac{1}{(1-A_iz^{a_i})(1-B_j)(1-C_mz^{-c_m})},
\end{equation}
where $A_i,B_j,C_m$ are monomials in $t_1,\ldots,t_k$.

{\bf Case 1.} If the equation (\ref{one single equation}) does not contain negative $c_m$, i.e. if $e=k$, then obviously
\[
\xi_S(t_1,\ldots,t_k,z)=\prod_{i=1}^d\prod_{j=d+1}^k\frac{1}{(1-A_iz^{a_i})(1-B_j)}
=\prod_{j=d+1}^k\frac{1}{1-B_j}\left(1+\sum_{n\geq 1}D_nz^n\right),
\]
$D_n\in{\mathbb Q}[[t_1,\ldots,t_k]]$,
and
\[
\chi_S(t_1,\ldots,t_k)=f_0(t_1,\ldots,t_k)=\prod_{j=d+1}^k\frac{1}{1-B_j}.
\]

{\bf Case 2.}
Similar arguments work when the equation (\ref{one single equation}) does not contain positive $a_i$, i.e. when $d=0$.
Again
\[
\chi_S(t_1,\ldots,t_k)=f_0(t_1,\ldots,t_k)=\prod_{j=1}^e\frac{1}{1-B_j}.
\]

{\bf Case 3.}
Now, let (\ref{one single equation}) contain both positive $a_i$ and negative $c_m$.
We shall use the Elliott tricky equality \cite[equation (4)]{E}
\begin{equation}\label{Eli-trick}
\frac{1}{(1-Az^a)(1-Cz^{-c})} = \frac{1}{1-ACz^{a-c}} \left(\frac{1}{1-Az^p}+\frac{1}{1-Cz^{-c}} -1 \right),
\end{equation}
where $a,c\in \mathbb{N}$ and $A,C$ are again monomials in $t_1,\ldots,t_k$. Applying (\ref{Eli-trick}) to a pair of $a_i$ and $c_m$, we shall
replace the product (\ref{products with positive and negative degrees}) by a sum of three similar products with numerators $\pm 1$.
Continuing with the application of (\ref{Eli-trick}) to each of the three expressions in several steps we shall obtain a sum of
products with denominators containing factors which do not depend on $z$ and factors with only positive or only negative degrees of $z$.
Then in order to obtain the expression of $f_0(t_1,\ldots,t_k)$ we handle each summand as in Case 1 and Case 2 of the proof.
In this way we compute the characteristic series of the set of the solutions of the first equation of (\ref{system of homogeneous Diophantine equations}).
Applying the same algorithm on $f_0(t_1,\ldots,t_k)$ we obtain the characteristic series of the solutions of the first two equations of
(\ref{system of homogeneous Diophantine equations}) and continue the process until we find the characteristic series $\chi_S(t_1,\ldots,t_k)$ of the set $S$
of the solutions of the whole system (\ref{system of homogeneous Diophantine equations}).
Below we shall explain why the process stops in a finite number of steps.
\end{proof}

The product (\ref{products with positive and negative degrees}) has $d+(k-e)$ factors depending on $z$.
The original arguments of Elliott are the following. If $a=c$,
then, applying (\ref{Eli-trick}), in any of the three summands the number of factors depending on $z$ is smaller.
If $a>c$, then after applying (\ref{Eli-trick}), one of the summands (the third one) has fewer number of factors depending on $z$.
For the first of the other two factors we replace the factor $(1-Az^a)(1-Cz^{-c})$ with negative degree of $z$ in $1-Cz^{-c}$
by the factor $(1-ACz^{a-c})(1-Az^a)$.
Since $a-c$ is between $-c$ and $a$, this expression is simpler than the original.
For the other factor $(1-ACz^{a-c})(1-Cz^{-c})$ again $a-c$ is between $-c$ and $a$ and the expression is simpler than the original, too.
Similar arguments can be applied for the case $a<c$.


We shall formalize the arguments of Elliott following the Master Thesis \cite{K} of the third named author of this paper.
Given two sequences of nonnegative integers
\[
\alpha=(a_1,\ldots,a_d)\text{ and }\gamma=(c_{e+1},\ldots,c_k),
\]
we denote by $\theta=[\alpha,\gamma]$ the corresponding pair of partitions
\[
[\alpha]=(a_{i_1},\ldots,a_{i_d}),\quad a_{i_1}\geq\cdots\geq a_{i_d},\quad [\gamma]=(c_{m_1},\ldots,c_{m_{k-e}}),\quad c_{m_1}\geq\cdots\geq c_{m_{k-e}}.
\]
Then we define the linear order
\[
\theta=[\alpha,\gamma]\prec[\alpha',\gamma']=\theta',\text{ if }[\alpha]<[\alpha']\text{ or }[\alpha]=[\alpha'],[\gamma]<[\gamma'],
\]
where the order $<$ in $[\alpha]<[\alpha']$ and $[\gamma]<[\gamma']$ is with respect to the usual lexicographic order. Obviously the order $\prec$
satisfies the descending chain condition.

\begin{proposition}\label{from Masther Thesis} The algorithm of Elliott in the proof of Theorem \ref{theorem of Elliott}
stops after a finite number of steps.
\end{proposition}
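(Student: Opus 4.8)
The plan is to track how the multiset of $z$-dependent factors in a product of the form (\ref{products with positive and negative degrees}) changes under one application of the Elliott identity (\ref{Eli-trick}), and to show that with respect to the order $\prec$ on pairs of partitions $\theta=[\alpha,\gamma]$ this multiset strictly decreases (or the number of $z$-dependent factors drops). Since $\prec$ satisfies the descending chain condition and the number of factors is a nonnegative integer, no infinite chain of reductions is possible, and the process of clearing a single equation terminates; since the whole system has finitely many equations, the full algorithm of Theorem \ref{theorem of Elliott} terminates.

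First I would set up the bookkeeping: to a summand of the shape $\prod_{i=1}^d\prod_{m=e+1}^k (1-A_iz^{a_i})^{-1}(1-C_mz^{-c_m})^{-1}$ (ignoring the $z$-free factors $(1-B_j)^{-1}$, which are never touched) I attach the invariant $\bigl(d+(k-e),\,\theta\bigr)$, where $d+(k-e)$ is the number of $z$-dependent factors and $\theta=[\alpha,\gamma]$ is the pair of partitions formed by the positive exponents $\alpha=(a_1,\ldots,a_d)$ and the negative-exponent magnitudes $\gamma=(c_{e+1},\ldots,c_k)$. These invariants are ordered first by the count $d+(k-e)$ (smaller is better) and, for equal counts, by $\prec$ on $\theta$. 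Then I would analyze the application of (\ref{Eli-trick}) to one factor $(1-Az^a)$ with $a>0$ and one factor $(1-Cz^{-c})$ with $c>0$, selecting, say, the pair realizing the largest positive exponent $a$ and the largest magnitude $c$, and splitting into the three cases $a=c$, $a>c$, $a<c$ exactly as in the discussion following the proof of Theorem \ref{theorem of Elliott}.

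In the case $a=c$ all three summands produced by (\ref{Eli-trick}) lose at least one $z$-dependent factor, so the count $d+(k-e)$ strictly decreases for each. In the case $a>c$: the third summand $\dfrac{-1}{1-ACz^{a-c}}$ again has a smaller count (the pair of $z$-factors is replaced by a single one); for the first summand the pair $(1-Az^a)(1-Cz^{-c})$ is effectively replaced by $(1-ACz^{a-c})(1-Az^a)$ and for the second by $(1-ACz^{a-c})(1-Cz^{-c})$, and in both of these the exponent $a-c$ lies strictly between $-c$ and $a$, so the multiset of exponents is lexicographically smaller — either the largest positive exponent drops (when $a-c\le 0$, removing an entry from $\alpha$) or it is unchanged in the top slot but the remaining magnitudes in $\gamma$ shrink (when $a-c>0$). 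The case $a<c$ is symmetric under swapping the roles of positive and negative exponents. In every branch the invariant $\bigl(d+(k-e),\theta\bigr)$ strictly decreases in the chosen order; since that order has no infinite descending chains, only finitely many reductions occur before every summand has all its $z$-dependent factors of one sign, at which point Case 1 and Case 2 of the proof of Theorem \ref{theorem of Elliott} extract $f_0$ directly.

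The main obstacle I anticipate is making the decrease argument for the first two summands in the case $a\neq c$ fully precise: one must check that after re-expressing $(1-Az^a)(1-Cz^{-c})$ the \emph{new} pair of partitions $\theta'$ really is $\prec\theta$ rather than merely ``not larger,'' and in particular that replacing an exponent by $a-c$ does not inadvertently create a larger partition when $a-c$ is compared against the other entries. The clean way around this is to always apply (\ref{Eli-trick}) to the factor carrying the lexicographically largest positive exponent together with the factor carrying the largest negative magnitude; then $a-c$, being strictly smaller in absolute value than both extremes, cannot exceed any retained entry on its own side, and one obtains a genuine strict drop in $\prec$. With that choice fixed, the verification in each of the three cases is routine, and the proposition follows.
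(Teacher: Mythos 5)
Your claim that ``in every branch the invariant $\bigl(d+(k-e),\theta\bigr)$ strictly decreases'' is false, and this is precisely the subtle point the paper's proof goes out of its way to handle. Consider the case $a_1>c_{e+1}$ and look at the first of the three summands produced by (\ref{Eli-trick}), the one with denominator $(1-ACz^{a-c})(1-Az^{a})$ (with $a=a_1$, $c=c_{e+1}$). Here the number of $z$-dependent factors does not drop (both new factors still carry $z$), and the partition $\alpha$ strictly \emph{gains} the entry $a_1-c_{e+1}>0$ while losing nothing, so $[\alpha']>[\alpha]$ in lexicographic order (pad $\alpha$ with a $0$ and compare). Consequently $\theta'\succ\theta$. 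Your proposed repair -- pair the largest positive exponent with the largest negative magnitude -- does not help: the new exponent $a-c$ can easily exceed the retained entries $a_2,\dots,a_d$ (e.g.\ $a=10$, $c=1$, $a_2=5$ gives $a-c=9>a_2$), and in any case adding an entry to $\alpha$ increases $[\alpha]$ lexicographically regardless of its size.

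What the paper actually does is accept that this one branch temporarily \emph{increases} $\theta$ and then shows it cannot run forever: each iteration of the ``bad'' branch consumes one entry of $\gamma$, so after at most $k-e$ iterations the bad branch has no negative exponents left and falls into Case~1 of Theorem~\ref{theorem of Elliott}; meanwhile, the two companion branches spun off at each iteration are genuinely $\prec$-smaller than $\theta$ (because the multiplicity of $a_1$ at the top of $\alpha$ drops), so they are handled by the well-founded induction. The symmetric argument covers $a_1<c_{e+1}$. This two-track argument -- a bounded number of steps along the one increasing branch, plus $\prec$-induction on the branches it sheds -- is the missing idea in your proposal, and without it the termination claim does not go through.
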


\begin{proof}
Applying (\ref{Eli-trick}) to the product $\xi=\xi_S(t_1,\ldots,t_k,z)$ from
(\ref{products with positive and negative degrees}) we shall follow the behavior of the two sequences
$\alpha=(a_1,\ldots,a_d)$ and $\gamma=(c_{e+1},\ldots,c_k)$ of the degrees of $z$
and the corresponding pair of partitions $\theta=[\alpha,\gamma]$.
Since the set of all finite integer sequences is well ordered with respect to $\prec$, it is sufficient to show that the statement holds for
the pair $\theta$ if it holds for all pairs $\varphi$ which are smaller with respect to $\prec$, and then to apply inductive arguments.
Without loss of generality we may assume that $a_1\geq\cdots\geq a_d$ and $c_{e+1}\geq\cdots\geq c_k$.
By virtues of (\ref{Eli-trick}) we replace the product (\ref{products with positive and negative degrees})
corresponding to $\theta=[\alpha,\gamma]$
by three products
\[
\xi'=\frac{1}{1-A_1C_1z^{a_1-c_{e+1}}}\prod_{i=1}^d\prod_{j=d+1}^e\prod_{m=e+2}^k\frac{1}{(1-A_iz^{a_i})(1-B_j)(1-C_mz^{-c_m})}
\]
\[
\xi''=\frac{1}{1-A_1C_1z^{a_1-c_{e+1}}}\prod_{i=2}^d\prod_{j=d+1}^e\prod_{m=e+1}^k\frac{1}{(1-A_iz^{a_i})(1-B_j)(1-C_mz^{-c_m})}
\]
\[
\xi'''=\frac{1}{1-A_1C_1z^{a_1-c_{e+1}}}\prod_{i=2}^d\prod_{j=d+1}^e\prod_{m=e+2}^k\frac{1}{(1-A_iz^{a_i})(1-B_j)(1-C_mz^{-c_m})},
\]
corresponding to the pairs $\theta'=[\alpha',\gamma']$, $\theta''=[\alpha'',\gamma'']$, $\theta'''=[\alpha''',\gamma''']$,
respectively.

{\bf Case 1.} Let $a_1=c_1$. Then
\[
\theta'=[(a_1-c_{e+1}=0,a_1,a_2,\ldots,a_d),(c_{e+2},\ldots,c_k)],
\]
\[
\theta''=[(a_2,\ldots,a_d),(c_{e+1},c_{e+2},\ldots,c_k)],
\]
\[
\theta'''=[(a_2,\ldots,a_d),(c_{e+2},\ldots,c_k)].
\]
Since $[c_{e+2},\ldots,c_k]<[c_{e+1},c_{e+2},\ldots,c_k]$ and $[a_2,\ldots,\ldots,a_d]<[a_1,a_2,\ldots,\ldots,a_d]$ we obtain that
$\theta',\theta'',\theta'''\prec\theta$ and we can apply inductive arguments.

{\bf Case 2.} Let $a_1>c_{e+1}$. Then $\theta',\theta'',\theta'''$ are, respectively,
\[
\theta'=[\alpha',\gamma']=[(a_1-c_{e+1},a_1,a_2,\ldots,a_d),(c_{e+2},\ldots,c_k)],
\]
\[
\theta''=[\alpha'',\gamma'']=[(a_1-c_{e+1},a_2,\ldots,a_d),(c_{e+1},c_{e+2},\ldots,c_k)],
\]
\[
\theta'''=[\alpha''',\gamma''']=[(a_1-c_{e+1},a_2,\ldots,a_d),(c_{e+2},\ldots,c_k)].
\]
Since $[\alpha'']=[\alpha''']<[\alpha]$ we have that
$\theta'',\theta'''\prec\theta$ and we can apply inductive arguments for them. But we have that $[\alpha']>[\alpha]$
and $\theta'\succ\theta$. Applying
(\ref{Eli-trick}) to $\theta'$ we obtain three pairs of partitions:
\[
(\theta')'=[(\alpha')',(\gamma')']=[(a_1-c_{e+1},a_1-c_{e+2},a_1,a_2,\ldots,a_d),(c_{e+3},\ldots,c_k)],
\]
\[
(\theta')''=[(\alpha')'',(\gamma')'']=[(a_1-c_{e+1},a_1-c_{e+2},a_2,\ldots,a_d),(c_{e+2},c_{e+3},\ldots,c_k)],
\]
\[
(\theta')'''=[(\alpha')''',(\gamma')''']=[(a_1-c_{e+1},a_1-c_{e+2},a_2,\ldots,a_d),(c_{e+3},\ldots,c_k)].
\]
Let us assume that
\[
[\alpha]=(\underbrace{a_1,\ldots,a_1}_{r\text{ times}},a_{r+1},\ldots,a_d),\quad a_1>a_{r+1}\geq\cdots\geq a_d.
\]
Then
\[
[(\alpha')'']=[(\alpha')''']=[\underbrace{a_1,\ldots,a_1}_{r-1\text{ times}},a_1-c_{e+1},a_1-c_{e+2},a_{r+1},\ldots,a_d]<[\alpha]
\]
and again $(\theta')'',(\theta')'''\prec\theta$. Hence we have a problem with $(\theta')'$ only.

The application of (\ref{Eli-trick}) to $(\theta')'$ gives three pairs $((\theta')')',((\theta')')'',((\theta')')'''$.
The latter two, $((\theta')')''$ and $((\theta')')'''$, are smaller than $\theta$ and
\[
((\theta')')'=[(a_1,\ldots,a_d,a_1-c_{e+1},a_1-c_{e+2},a_1-c_{e+3}),(c_{e+4},\ldots,c_k)]\succ\theta.
\]
Continuing to apply (\ref{Eli-trick}), we obtain in each step pairs of partitions which are smaller than $\theta$, and the pairs
\[
[(a_1,\ldots,a_d,a_1-c_{e+1},\ldots,a_1-c_{e+j}),(c_{e+j+1},\ldots,c_k)]\succ\theta.
\]
Finally, we shall reach the pair
\[
[(a_1,\ldots,a_d,a_1-c_{e+1},\ldots,a_1-c_k),(0)]
\]
corresponding to the product
\[
\prod_{i=1}^d\frac{1}{1-A_iz^{a_i}}\prod_{m=e+1}^k\frac{1}{1-A_1C_mz^{a_1-c_m}}\prod_{j=d+1}^e\frac{1}{1-B_j}
\]
which we can handle as in Case 1 of Theorem \ref{theorem of Elliott}.

{\bf Case 3.} Let $a_1<c_{e+1}$. Applying (\ref{Eli-trick}) to $\theta$ gives pairs of partitions
\[
\theta'=[\alpha',\gamma']=[(a_1,a_2,\ldots,a_d),(c_{e+1}-a_1,c_{e+2},\ldots,c_k)],
\]
\[
\theta''=[\alpha'',\gamma'']=[(a_2,\ldots,a_d),(c_{e+1}-a_1,c_{e+1},c_{e+2},\ldots,c_k)],
\]
\[
\theta'''=[\alpha''',\gamma''']=[(a_2,\ldots,a_d),(c_{e+1}-a_1,c_{e+2},\ldots,c_k)]
\]
with $\theta',\theta'''\prec\theta$. As in Case 2, we replace $\theta''$ by a sequence
\[
[(a_{j+1},\ldots,a_d),(c_{e+1}-a_1,\ldots,c_{e+1}-a_j,c_{e+1},c_{e+2},\ldots,c_k)],\quad j=2,\ldots,d,
\]
until we obtain $[(0),(c_{e+1}-a_1,\ldots,c_{e+1}-a_d,c_{e+1},c_{e+2},\ldots,c_k)]$
and then handle the corresponding product as in Case 2 of Theorem \ref{theorem of Elliott}.
\end{proof}

The following theorem is a modification of Theorem \ref{theorem of Elliott} for systems of Diophantine inequalities.

\begin{theorem}\label{theorem for system of inequalities}
Let $a_{ij},a_i\in\mathbb{Z}$, $i=1,\ldots,m$, $j=1,\ldots,k$,
be arbitrary integers. Then the characteristic series $\chi_S(t_1,\ldots,t_k)$ of the set $S$ of the solutions in nonnegative integers
of the system of Diophantine inequalities
\[
\begin{array}{|cl}
a_{11} x_1 + \cdots + a_{1k} x_k + a_1\geq 0 \\
\cdots&\\
a_{m1} x_1 + \cdots + a_{mk} x_k + a_m\geq 0 \\
\end{array}
\]
is a nice rational function.
\end{theorem}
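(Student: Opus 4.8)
The plan is to reduce the system of inequalities to a single homogeneous system of equations and then invoke Theorem~\ref{theorem of Elliott}, but with one technical adjustment to account for the constant terms $a_i$. First I would introduce slack variables and a homogenizing variable: following the construction of Section~3, replace each inequality $a_{i1}x_1+\cdots+a_{ik}x_k+a_i\ge 0$ by the equation $a_{i1}x_1+\cdots+a_{ik}x_k+a_iy-y_i=0$ with a fresh unknown $y_i\in\mathbb{N}$, and keep one common homogenizing unknown $y$. This yields a homogeneous linear Diophantine system in the unknowns $x_1,\ldots,x_k,y,y_1,\ldots,y_m$, to which Theorem~\ref{theorem of Elliott} applies: its set of solutions $\widetilde S\subset\mathbb{N}^{k+1+m}$ has characteristic series $\chi_{\widetilde S}(t_1,\ldots,t_k,u,u_1,\ldots,u_m)$ which is a nice rational function.

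Next I would recover $\chi_S$ from $\chi_{\widetilde S}$ by specialization. By Theorem~\ref{reduction from arbitrary to homogeneous}, a point $(x_1,\ldots,x_k)\in\mathbb{N}^k$ is a solution of the original system exactly when it extends to a solution of the homogeneous system with $y=1$. So the plan is to extract from $\chi_{\widetilde S}$ the part with $u$-degree exactly $1$, then forget the slack variables $u_1,\ldots,u_m$. Extracting the coefficient of $u^1$ is an instance of the Laurent-series operation of Lemma~\ref{Laurent series for equations} (equivalently: adjoin the equation $y=0$ after shifting, or use the ``$f_0$'' operator of the $\Omega$-calculus applied to $u^{-1}\xi$), and by the analysis in the proof of Theorem~\ref{theorem of Elliott} this operation sends a nice rational function to a nice rational function; setting $u_1=\cdots=u_m=1$ afterwards clearly preserves niceness since it only collapses some denominator factors $(1-u_i M)$ to $(1-M)$ or, when a factor becomes $(1-1)=0$ in an illegitimate way, the slack structure guarantees this does not happen because every $y_i$ genuinely ranges over all of $\mathbb{N}$ independently once $y$ is fixed. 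Thus $\chi_S(t_1,\ldots,t_k)$ is obtained from a nice rational function by an Elliott-type reduction plus a harmless substitution, hence is itself a nice rational function.

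Alternatively, and perhaps more cleanly, I would argue directly with the machinery already set up in Section~4: start from $\chi_{\mathbb{N}^k}(t_1,\ldots,t_k)=\prod_{i=1}^k 1/(1-t_i)$, which is nice, and process the inequalities one at a time using Lemma~\ref{Laurent series} instead of Lemma~\ref{Laurent series for equations}. For the inequality $a_{i1}x_1+\cdots+a_{ik}x_k+a_i\ge 0$, form $\xi(t_1,\ldots,t_k,z)=z^{a_i}f(t_1z^{a_{i1}},\ldots,t_kz^{a_{ik}})$ where $f$ is the nice rational function describing the solutions of the inequalities processed so far. Running the Elliott trick~\eqref{Eli-trick} exactly as in Case~3 of Theorem~\ref{theorem of Elliott}, and terminating by Proposition~\ref{from Masther Thesis}, one arrives at a sum of products whose denominators have only-positive, only-negative, or no powers of $z$; then by Lemma~\ref{Laurent series} one must collect $\sum_{n\ge 0}f_n$, i.e. the full nonnegative part, rather than just $f_0$. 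For a summand whose $z$-powers are all $\le 0$ this nonnegative part is again a sum of $1/(1-B_j)$-type factors (the $z^0$ piece), and for a summand with some positive $z$-powers one needs the nonnegative part of $\prod 1/(1-A_iz^{a_i})$, which expands as $\prod 1/(1-A_iz^{a_i})$ itself since all exponents are positive, giving a nice rational function upon setting $z=1$.

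The main obstacle I anticipate is precisely this last point: extracting the \emph{nonnegative} part of a Laurent series (required by Lemma~\ref{Laurent series}, as opposed to the single coefficient $f_0$ needed in the homogeneous case). One must verify that $\sum_{n\ge 0}f_n(t_1,\ldots,t_k)$ is nice, not merely that $f_0$ is. After the Elliott reduction each surviving summand has a denominator of the form $\prod_i(1-A_iz^{a_i})\prod_j(1-B_j)$ (with all $a_i>0$) or $\prod_m(1-C_mz^{-c_m})\prod_j(1-B_j)$ (with all $c_m>0$), possibly times a single extra factor $1-Dz^0$ from the numerator-shift. In the first case the whole series already lies in $\mathbb{C}[[t_1,\ldots,t_k,z]]$, so its nonnegative part is obtained by setting $z=1$, yielding $\prod_i 1/(1-A_i)\prod_j 1/(1-B_j)$ — nice. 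In the second case the $z$-powers are $\le 0$, so the nonnegative part is just the constant term in $z$, which is $\prod_j 1/(1-B_j)$ — also nice. The point needing care is bookkeeping of the $z^{a_i}$ overall prefactor from Lemma~\ref{Laurent series} and making sure the trick~\eqref{Eli-trick} still terminates in its presence, but this is handled verbatim by Proposition~\ref{from Masther Thesis} since a constant shift in $z$-degree does not affect the partition-pair ordering argument. Hence the reduction goes through and $\chi_S$ is a nice rational function.
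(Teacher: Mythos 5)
Your second, ``cleaner'' argument is essentially the paper's route, but it has a genuine gap exactly at the point you flag as needing care. After the Elliott reduction each surviving summand really has the form
\[
z^d\, E\,\prod_i\frac{1}{1-G_i}\,\prod_{j}\frac{1}{1-H_jz^{h_j}}
\]
with a \emph{nonzero} residual exponent $d$ coming both from the $z^{a}$ prefactor of Lemma~\ref{Laurent series} and from the monomial numerator of the nice rational function being processed; $d$ need not be zero and need not have the same sign as the $h_j$. Your case analysis silently assumes $d=0$ (or $d$ of the same sign as the $h_j$): you write that when all $h_j>0$ ``the whole series already lies in $\mathbb{C}[[t_1,\ldots,t_k,z]]$, so its nonnegative part is obtained by setting $z=1$,'' but if $d<0$ this is false --- the product $z^d\prod_j 1/(1-H_jz^{h_j})$ has finitely many terms of negative $z$-degree that must be identified and discarded before evaluating at $z=1$. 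Dually, if all $h_j<0$ and $d>0$, the nonnegative part is not just the constant term but a finite polynomial truncation. These are precisely the paper's Cases~3 and~4, which handle the mismatch by expanding $\prod_j 1/(1-H_jz^{h_j})$ up to order $|d|$ and splitting off the offending polynomial tail. You do mention ``bookkeeping of the $z^{a_i}$ overall prefactor,'' but you treat it purely as a termination concern for Proposition~\ref{from Masther Thesis}; the real issue is that the prefactor changes which $z$-coefficients are nonnegative, and your proof does not address that.

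Your first approach (homogenize with a variable $y$ and slack variables $y_i$, apply Theorem~\ref{theorem of Elliott}, then extract the coefficient of $u^1$ and set $u_1=\cdots=u_m=1$) is a genuinely different and in principle viable route --- it delegates all the $z$-extraction work to the homogeneous Theorem~\ref{theorem of Elliott} and to Theorem~\ref{reduction from arbitrary to homogeneous}, so it avoids the Cases~3/4 analysis entirely. But as written it is only a sketch: you would still need to justify that extracting the coefficient of $u^1$ from a nice rational function yields a nice rational function (multiplying by $u^{-1}$ and taking the constant term via Lemma~\ref{Laurent series for equations} works, but say so), and the claim that setting $u_i=1$ cannot produce a vanishing denominator factor needs an actual argument rather than an appeal to the slacks ``ranging freely,'' since after restricting to $y=1$ each $y_i$ is in fact determined by $x_1,\ldots,x_k$.
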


\begin{proof}
We repeat the arguments from the proof of Theorem \ref{theorem of Elliott} using Lemma \ref{Laurent series}
instead of Lemma \ref{Laurent series for equations}. Consider the linear Diophantine inequality
\[
a_1x_1+\cdots+a_kx_k+a\geq 0,\quad a_i,a\in\mathbb{Z},
\]
and the nice rational function
\[
f(t_1,\ldots,t_k)=t_1^{r_1}\cdots t_k^{r_k}\prod\frac{1}{1-t_1^{q_{i1}}\cdots t_k^{q_{ik}}}
\]
with a set $S$ of solutions in nonnegative integers. By Lemma \ref{Laurent series} we have to compute the component
\[
\chi_S(t_1,\ldots,t_k)=\sum_{n=0}^{\infty}f_n(t_1,\ldots,t_k)
\]
of the Laurent series
\[
\xi_S(t_1,\ldots,t_k,z)=z^af(t_1z^{a_1},\ldots,t_kz^{a_k})=\sum_{n=-\infty}^{\infty}f_n(t_1,\ldots,t_k)z^n.
\]
Applying the algorithm of Elliott to the part $\displaystyle \prod\frac{1}{1-D_iz^{d_i}}$ of
\[
z^af(t_1z^{a_1},\ldots,t_kz^{a_k})=z^bB\prod\frac{1}{1-D_iz^{d_i}},
\]
where $B$ and $D_i$ are monomials in $t_1,\ldots,t_k$,
we present $z^af(t_1z^{a_1},\ldots,t_kz^{a_k})$ as a sum of products of the form
\[
\xi^+=z^dE\prod\frac{1}{1-G_i}\prod_{h_j>0}\frac{1}{1-H_jz^{h_j}}\text{ and }\xi^-=z^dE\prod\frac{1}{1-G_i}\prod_{h_j<0}\frac{1}{1-H_jz^{h_j}},
\]
where again $E,G_i,H_j$ are monomials in $t_1,\ldots,t_k$. In order to complete the proof we have to determine the contribution
of each summand $\xi^+$ and $\xi^-$ to $\chi_S(t_1,\ldots,t_k)$.

{\bf Case 1.} The exponent $d$ of $z$ in $\xi^+$ satisfies $d\geq 0$. Then the whole $\xi^+$ contributes to $\chi_S(t_1,\ldots,t_k)$.

{\bf Case 2.} The exponent $d$ of $z$ in $\xi^-$ satisfies $d\leq 0$. If $d<0$, then $\xi^-$ does not contribute to $\chi_S(t_1,\ldots,t_k)$
because its expansion as a Laurent series contains only negative degrees of $z$. If $d=0$, then the contribution of $\xi^-$ is
$\displaystyle E\prod\frac{1}{1-G_i}$.

{\bf Case 3.} The exponent $d$ of $z$ in $\xi^+$ satisfies $d<0$. We expand $\displaystyle \prod_{h_j>0}\frac{1}{1-H_jz^{h_j}}$ as
\[
\prod_{h_j>0}\frac{1}{1-H_jz^{h_j}}=1+K_1z+K_2z^2+\cdots+K_{d-1}z^{d-1}+z^dL(z),
\]
where $L(z)\in {\mathbb C}[[t_1,\ldots,t_k,z]]$. Then
\[
\xi^+=E\prod\frac{1}{1-G_i}(z^{-d}+K_1z^{-d+1}+K_2z^{-d+2}+\cdots+K_{d-1}z^{-1})
+E\prod\frac{1}{1-G_i}L(z)
\]
and the contribution of $\xi^+$ to $\chi_S(t_1,\ldots,t_k)$ is $\displaystyle E\prod\frac{1}{1-G_i}L(z)$.

{\bf Case 4.} The exponent $d$ of $z$ in $\xi^-$ satisfies $d>0$. As in Case 3 we have
\[
\prod_{h_j<0}\frac{1}{1-H_jz^{h_j}}=1+K_1z^{-1}+K_2z^{-2}+\cdots+K_dz^{-d}+z^{-(d+1)}L(z),
\]
where $L(z)\in {\mathbb C}[[t_1,\ldots,t_k,z^{-1}]]$. Then
\[
\xi^-=E\prod\frac{1}{1-G_i}(z^d+K_1z^{d-1}+K_2z^{d-2}+\cdots+K_{d-1}z+K_d)
+z^{-1}E\prod\frac{1}{1-G_i}L(z)
\]
and the contribution of $\xi^-$ is
\[
E\prod\frac{1}{1-G_i}(z^d+K_1z^{d-1}+K_2z^{d-2}+\cdots+K_{d-1}z+K_d).
\]
\end{proof}

There are several algorithms for solving linear systems of Diophantine equations and inequalities
using the method of Elliott,
see e.g. Domenjoud, Tom\'as \cite{DT}, Pasechnik \cite{Pa} and Xin \cite{X}.

Applying the result of Elliott we start with a nice rational function and obtain the result also in the form
of a nice rational function. See Stanley \cite{S1} for further discussions and applications of the approach of Elliott.

The next theorem of Blakley \cite{Bla} gives another point of view of the problem.

\begin{theorem}\label{theorem of Blakley}
Let
\[
f(t_1,\ldots,t_k)=\prod_{i=1}^n\frac{1}{1-t_1^{a_{i1}}\cdots t_k^{a_{ik}}}=\sum_{b_j\geq 0}\beta(b_1,\ldots,b_k)t_1^{b_1}\cdots t_k^{b_k},
\quad \beta(b_1,\ldots,b_k)\in{\mathbb N}.
\]
Then there is a finite decomposition of ${\mathbb N}^k$ such that the coefficients $\beta(b_1,\ldots,b_k)$ are polynomials
of degree $n-k$ in $b_1,\ldots,b_k$ on each piece.
\end{theorem}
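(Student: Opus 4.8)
The plan is to read $\beta$ as a lattice-point count and then induct on $n$, peeling the factors of $f$ off one at a time while staying inside a class of functions that is visibly closed under the only operation the induction requires. Expanding each factor of $f$ as a geometric series, exactly as in the proof of Lemma~\ref{Lemma of Euler}, gives
\[
\beta(b_1,\dots,b_k)=\#\Bigl\{x=(x_1,\dots,x_n)\in\mathbb N^n \;:\; \textstyle\sum_{i=1}^n x_i\,a_i=(b_1,\dots,b_k)\Bigr\},
\]
where $a_i=(a_{i1},\dots,a_{ik})$; that is, $\beta$ is the \emph{vector partition function} of the $k\times n$ integer matrix $A$ with columns $a_1,\dots,a_n$. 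For $n-k$ to be the correct degree one must assume, as the statement tacitly does, that every $a_i\neq 0$ (so that $f$ is a genuine power series) and that the $a_i$ span $\mathbb R^k$, i.e. $\operatorname{rank}A=k$; after permuting the factors we may also assume $a_1,\dots,a_k$ are linearly independent. I would make the conclusion precise by fixing, for each $\delta\in\mathbb N$, the class $\mathcal C_\delta$ of functions $g\colon\mathbb N^k\to\mathbb N$ for which there is a \emph{finite} partition of $\mathbb N^k$ into pieces of the form $C\cap(v+\Lambda)$, with $C$ a rational polyhedral cone and $v+\Lambda$ a coset of a finite-index sublattice $\Lambda\subseteq\mathbb Z^k$, such that $g$ restricts to a polynomial of degree $\le\delta$ on each piece. (Allowing the congruence part $v+\Lambda$ is precisely what turns the familiar quasi-polynomial into an honest polynomial on each piece.) The theorem is the assertion $\beta\in\mathcal C_{n-k}$.

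The crux is a single closure lemma, to be established first: if $g\in\mathcal C_\delta$, if $v\in\mathbb N^k\setminus\{0\}$, and if $\varphi\colon\mathbb N^k\to\mathbb N$ is \emph{piecewise affine-integral} (affine with rational coefficients and integer-valued on each piece of some finite cone-coset partition), then
\[
h(b):=\sum_{j=0}^{\varphi(b)} g(b-j\,v)\ \in\ \mathcal C_{\delta+1}.
\]
To prove it, take the common refinement of the cone-coset partitions witnessing $g\in\mathcal C_\delta$ and $\varphi$ piecewise affine-integral, and refine further so that on each resulting region the point $b-jv$ stays in one piece of the $g$-partition as $j$ runs over the summation interval; this cuts $0\le j\le\varphi(b)$ into finitely many sub-intervals $\psi_1(b)\le j\le\psi_2(b)$ with piecewise affine-integral endpoints. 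On such a region $g(b-jv)$ is the value of a fixed polynomial at $b-jv$, hence a polynomial $Q(b,j)$ of total degree $\le\delta$; a Faulhaber-type evaluation of $\sum_{j=\psi_1}^{\psi_2}Q(b,j)$ followed by substitution of the affine endpoints yields a polynomial in $b$ of degree $\le\delta+1$. Only finitely many hyperplanes and congruences are ever introduced, so the partition stays finite and $h\in\mathcal C_{\delta+1}$.

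Granting the lemma, the induction on $n$ is immediate. Base case $n=k$: since $a_1,\dots,a_k$ form a $\mathbb Q$-basis, $Ax=b$ has the unique rational solution $A^{-1}b$, so $\beta(b)\in\{0,1\}$ and equals $1$ precisely on the single cone-coset piece $A\mathbb N^k=A\mathbb R_{\ge0}^k\cap A\mathbb Z^k$; hence $\beta\in\mathcal C_0$. Inductive step: write $f=\bigl(\prod_{i=1}^{n-1}(1-\mathbf{t}^{a_i})^{-1}\bigr)(1-\mathbf{t}^{a_n})^{-1}$ and let $\gamma$ be the vector partition function of $a_1,\dots,a_{n-1}$, which still has rank $k$ by our ordering, so $\gamma\in\mathcal C_{n-1-k}$ by induction. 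Then $\beta(b)=\sum_{j\ge0,\ b-ja_n\in\mathbb N^k}\gamma(b-ja_n)=\sum_{j=0}^{\varphi(b)}\gamma(b-ja_n)$ with $\varphi(b)=\min\{\lfloor b_\ell/a_{n\ell}\rfloor:\,a_{n\ell}>0\}$; decomposing $\mathbb N^k$ by which index $\ell$ attains the minimum and by the residues of $b_\ell$ modulo $a_{n\ell}$ shows $\varphi$ is piecewise affine-integral, so the lemma gives $\beta\in\mathcal C_{(n-1-k)+1}=\mathcal C_{n-k}$.

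The genuine obstacle is entirely in the closure lemma: verifying rigorously that the summation limits are piecewise affine-integral functions of $b$, and that the successive common refinements of all the cone-coset partitions that appear stay finite — once that bookkeeping is in place the degree count is automatic. One further point the induction leaves open is that the degree is \emph{exactly} $n-k$ on the full-dimensional chambers rather than merely $\le n-k$; I would settle this separately by identifying the leading part of $\beta$ on such a chamber with the Euclidean volume of the fiber polytope $\{x\in\mathbb R_{\ge0}^n:Ax=b\}$, a positive homogeneous function of $b$ of degree $n-k$, a fact that can be tracked alongside the induction or invoked from Ehrhart--Stanley theory.
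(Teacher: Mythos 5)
The paper does not prove this statement; it quotes it as a theorem of Blakley~\cite{Bla} and points to Sturmfels~\cite{St} for a method of computing the decomposition, so there is no in-paper proof to compare against. Your argument therefore has to stand on its own, and in outline it does: reading $\beta$ as the vector partition function of the matrix $A$ with columns $a_1,\dots,a_n$, fixing the class $\mathcal C_\delta$, proving the closure lemma for $g\mapsto\sum_{j=0}^{\varphi(b)}g(b-jv)$, and inducting on $n$ from the base case $n=k$ is a legitimate and fairly standard route, close in spirit to the Dahmen--Micchelli line of arguments and more elementary than the toric/Gr\"obner approach of Sturmfels. You are also right to flag the tacit hypotheses $a_i\neq 0$ and $\operatorname{rank}A=k$, without which the degree count $n-k$ is simply wrong.

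One technical point you did not notice does need fixing: the pieces of $\mathcal C_\delta$ cannot all be taken to be cone-cosets $C\cap(v+\Lambda)$ with $C$ a \emph{homogeneous} cone, because the refinement inside the closure lemma introduces genuinely affine, not linear, constraints on $b$. Concretely, once residues are fixed, $\varphi(b)=\min_\ell\lfloor b_\ell/a_{n\ell}\rfloor$ becomes affine with a nonzero constant term, and comparing it to the linear crossing thresholds $j^{*}=\langle u,b\rangle/\langle u,v\rangle$ coming from the cone walls of the $g$-partition cuts $b$-space by affine hyperplanes; the resulting regions are rational polyhedra, not cones through the origin, so the closure lemma as stated leaves the class $\mathcal C_\delta$. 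Replacing ``cone'' by ``rational polyhedron'' in the definition of $\mathcal C_\delta$ costs nothing (the theorem asks only for a finite decomposition of $\mathbb N^k$) and lets the induction close; the stronger fact that one may take a conical fan after fixing residues is true but requires a separate argument. Beyond that, the two gaps you flag yourself---the Faulhaber bookkeeping in the closure lemma, and upgrading degree $\le n-k$ to exact degree $n-k$ on full-dimensional chambers via the volume of the fiber polytope $\{x\in\mathbb R_{\ge0}^n : Ax=b\}$---are real and do take work, but your plan for both is sound.
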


In the notation of Theorem \ref{theorem of Blakley} Sturmfels \cite{St} proposed a method to find such a decomposition and
the polynomials which express the coefficients $\beta(b_1,\ldots,b_k)$.

\section{The algorithm of Xin}
In this section we give an idea for the algorithm of Xin \cite{X} in a form suitable for our purposes.
The algorithm is based on two easy observations.

\begin{lemma}\label{decomposition in irreducibles}
Let
\[
q(t_1,\ldots,t_k,z)=1-t_1^{a_1}\cdots t_k^{a_k}z^b,\quad a_i\in{\mathbb N},b\in{\mathbb Z}.
\]

{\rm (i)} If $b>0$, then $q(t_1,\ldots,t_k,z)$ decomposes as a product of irreducible polynomials in ${\mathbb Q}[t_1,\ldots,t_k,z]$
with constant terms (as polynomials in $z$) equal to $1$.

{\rm (ii)} If $b<0$, then $q(t_1,\ldots,t_k,z)$ is decomposed as
\[
q(t_1,\ldots,t_k,z)=z^b\prod_{i=1}^mu_i(t_1,\ldots,t_k,z),
\]
where the irreducible polynomials $u_i(t_1,\ldots,t_k,z)\in{\mathbb Q}[t_1,\ldots,t_k,z]$
are with leading terms (as polynomials in $z$) equal to $z^{n_i}$, $n_i\geq 1$.
\end{lemma}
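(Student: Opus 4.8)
The plan is to analyze the polynomial $q(t_1,\ldots,t_k,z) = 1 - t_1^{a_1}\cdots t_k^{a_k} z^b$ by factoring out the monomial obstruction and then invoking a standard irreducibility fact. For part (i), when $b > 0$, the polynomial $q$ is already a genuine polynomial in $\mathbb{Q}[t_1,\ldots,t_k,z]$ with constant term $1$ (set $t_i = z = 0$). Since $\mathbb{Q}[t_1,\ldots,t_k,z]$ is a UFD, $q$ factors into irreducibles, say $q = u_1 \cdots u_m$. The constant term of $q$ as a polynomial in $z$ — that is, its value at $z = 0$ — equals $1$ (the term $t_1^{a_1}\cdots t_k^{a_k} z^b$ vanishes at $z=0$ since $b \ge 1$). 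The constant-in-$z$ term of a product is the product of the constant-in-$z$ terms of the factors, so each $u_i$ has constant-in-$z$ term a nonzero element of $\mathbb{Q}[t_1,\ldots,t_k]$ whose product over $i$ is $1$; by rescaling each $u_i$ by a unit (a nonzero rational, since the $u_i$ are polynomials and their $z$-constant parts multiply to the constant $1$) we may normalize so that each has $z$-constant term equal to $1$. This establishes (i).

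For part (ii), when $b < 0$, write $b = -|b|$ so that $q = 1 - t_1^{a_1}\cdots t_k^{a_k} z^{-|b|}$. Clearing the negative power of $z$, we have $z^{|b|} q = z^{|b|} - t_1^{a_1}\cdots t_k^{a_k}$, which is an honest element $p(t_1,\ldots,t_k,z) \in \mathbb{Q}[t_1,\ldots,t_k,z]$, monic of degree $|b|$ as a polynomial in $z$. Factor $p = u_1 \cdots u_m$ into irreducibles in the UFD $\mathbb{Q}[t_1,\ldots,t_k,z]$. The leading coefficient in $z$ of $p$ is $1$, hence is the product of the leading-in-$z$ coefficients of the $u_i$; these are nonzero rationals multiplying to $1$, so after rescaling by units we may take each $u_i$ monic in $z$, i.e. with leading $z$-term $z^{n_i}$ where $n_i = \deg_z u_i$. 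It remains to check $n_i \ge 1$ for every $i$: a factor with $n_i = 0$ would be a nonzero element of $\mathbb{Q}[t_1,\ldots,t_k]$, which would then divide $z^{|b|} - t_1^{a_1}\cdots t_k^{a_k}$; but this polynomial is primitive as a polynomial in $z$ over $\mathbb{Q}[t_1,\ldots,t_k]$ (its coefficients are $1$ and $-t_1^{a_1}\cdots t_k^{a_k}$, which are coprime in the UFD $\mathbb{Q}[t_1,\ldots,t_k]$ since $1$ appears), so no nonunit of $\mathbb{Q}[t_1,\ldots,t_k]$ divides it. Thus every $n_i \ge 1$, and rewriting $q = z^{-|b|} p = z^b \prod_i u_i$ gives the claimed decomposition.

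The main obstacle is the bookkeeping around units and the precise normalization: one must be careful that "irreducible with constant term $1$" (resp. "leading term $z^{n_i}$") is achievable, which rests on the observation that the relevant coefficients (the $z$-constant terms in (i), the $z$-leading terms in (ii)) are units of the coefficient ring $\mathbb{Q}[t_1,\ldots,t_k]$ — indeed they are nonzero rationals because their product is $1$ — so each factor can be rescaled independently without disturbing the others. A secondary point is the primitivity argument in (ii) needed to rule out factors that do not involve $z$; this uses that $1$ occurs among the $z$-coefficients of $z^{|b|} - t_1^{a_1}\cdots t_k^{a_k}$, which makes the content trivial. Neither step is deep, but both must be stated cleanly so that the subsequent use of the lemma in the algorithm of Xin is on firm footing.
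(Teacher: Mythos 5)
Your argument is correct and follows essentially the same route as the paper: evaluate (resp. clear denominators and take leading coefficients) to see that the constant-in-$z$ (resp. leading-in-$z$) coefficients of the irreducible factors multiply to $1$ in the domain $\mathbb{Q}[t_1,\ldots,t_k]$, hence are nonzero rationals, and normalize. Your extra primitivity argument for $n_i\ge 1$ is harmless but redundant, since a degree-$0$-in-$z$ factor would itself be its leading coefficient, hence a nonzero rational, hence a unit and not irreducible.
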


\begin{proof} (i) Let $b>0$ and let $q(t_1,\ldots,t_k,z)=1-t_1^{a_1}\cdots t_k^{a_k}z^b$ decompose as
\[
q(t_1,\ldots,t_k,z)=\prod_{i=1}^mu_i(t_1,\ldots,t_k,z), \quad u_i(t_1,\ldots,t_k,z)\in{\mathbb Z}[t_1,\ldots,t_k,z].
\]
Comparing the constant term 1 of $q(t_1,\ldots,t_k,z)$ with respect to $z$ with the product of the constant terms of the factors
$u_i(t_1,\ldots,t_k,z)$ we derive that the constant terms of $u_i(t_1,\ldots,t_k,z)$ belong to $\mathbb Q$,
i.e. we may assume that they are equal to 1.

(ii) If $b<0$ we present $q(t_1,\ldots,t_k,z)$ in the form
\[
q(t_1,\ldots,t_k,z)=\frac{1}{z^c}q_1(t_1,\ldots,t_k,z), \quad q_1(t_1,\ldots,t_k,z)=z^c-t_1^{a_1}\cdots t_k^{a_k},\quad c=-b.
\]
As in (i), comparing the leading monomials $z^c$ of $q_1(t_1,\ldots,t_k,z)$ and the product of the leading monomials
$u_i(t_1,\ldots,t_k,z)$ we derive the proof of (ii).
\end{proof}

\begin{proposition}\label{partial fraction}
Let
\begin{equation}\label{equation for partial fraction}
\begin{split}
f(t_1,\ldots,t_k,z)&=g(t_1,\ldots,t_k,z,z^{-1})\prod_{i=1}^m\frac{1}{1-t_1^{a_{i1}}\cdots t_k^{a_{ik}}z^{b_i}}\\
&=\sum_{n=-\infty}^{\infty}f_n(t_1,\ldots,t_k)z^n,\\
\end{split}
\end{equation}
where $a_{ij}\in{\mathbb N}$, $b_i\in{\mathbb Z}$,
$g(t_1,\ldots,t_k,z,z^{-1})\in{\mathbb Z}[t_1,\ldots,t_k,z,z^{-1}]$
is a polynomial in $t_1,\ldots,t_k$ and a Laurent polynomial in $z$,
and $f_n(t_1,\ldots,t_k)\in \mathbb{Q}[[t_1,\ldots,t_k]]$.
Let the partial fraction decomposition of $f(t_1,\ldots,t_k,z)$ be
\[
f(t_1,\ldots,t_k,z)=p(t_1,\ldots,t_k,z)+\sum_{l,d}\frac{p_{ld}(t_1,\ldots,t_k,z)}{q_l^d(t_1,\ldots,t_k,z)}
+\sum_{j,e}\frac{r_{je}(t_1,\ldots,t_k,z)}{s_j^e(t_1,\ldots,t_k,z)},
\]
where $p,p_{ld},q_l,r_{je},s_j\in {\mathbb Q}[t_1,\ldots,t_k,z]$,
$q_l$ and $s_j$ are irreducible in ${\mathbb Q}[t_1,\ldots,t_k,z]$, 
$\deg_zp_{ld}<\deg_zq_l$, $\deg_zr_{je}<\deg_zs_j$, the constant term $q_l(t_1,\ldots,t_k,0)$ of each $q_l$ be nonzero and belong to $\mathbb Q$ and
the constant term $s_j(t_1,\ldots,t_k,0)$ of each $s_j$ be a polynomial of positive degree in ${\mathbb Q}[t_1,\ldots,t_k]$ (or $s_j(t_1,\ldots,t_k,0)=z$).
Then
\[
h(t_1,\ldots,t_k,z)=\sum_{n=0}^{\infty}f_n(t_1,\ldots,t_k)z^n
=p(t_1,\ldots,t_k,z)+\sum_{l,d}\frac{p_{ld}(t_1,\ldots,t_k,z)}{q_l^d(t_1,\ldots,t_k,z)}
\]
and $f_0(t_1,\ldots,t_k)=h(t_1,\ldots,t_k,0)$.
\end{proposition}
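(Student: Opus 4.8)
The plan is to expand every summand of the partial fraction decomposition separately as a two--sided formal Laurent series in $z$ with coefficients in $\mathbb{Q}[[t_1,\ldots,t_k]]$, to observe that the summands with denominators $q_l^d$ together with $p$ contribute only nonnegative powers of $z$ whereas those with denominators $s_j^e$ contribute only strictly negative powers of $z$, and to match these two groups with the nonnegative and the negative parts of the given expansion $f=\sum_nf_n(t_1,\ldots,t_k)z^n$. Throughout we use, as in the applications, that each monomial $t_1^{a_{i1}}\cdots t_k^{a_{ik}}$ is non--constant.

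First I would record the one--term expansions. Since $q_l(t_1,\ldots,t_k,0)$ is a nonzero element of $\mathbb{Q}$, each $q_l$ is a unit of $\mathbb{Q}[t_1,\ldots,t_k][[z]]$, so every $p_{ld}/q_l^d$ and a fortiori $p$ lies in $\mathbb{Q}[t_1,\ldots,t_k][[z]]$; hence
\[
h'(t_1,\ldots,t_k,z):=p(t_1,\ldots,t_k,z)+\sum_{l,d}\frac{p_{ld}(t_1,\ldots,t_k,z)}{q_l^d(t_1,\ldots,t_k,z)}
\]
involves only nonnegative powers of $z$. Dually, if $s_j=z$ then $\deg_zr_{je}<\deg_zs_j=1$ forces $r_{je}\in\mathbb{Q}[t_1,\ldots,t_k]$ and $r_{je}/s_j^e=r_{je}z^{-e}$; and if $s_j$ is an irreducible whose constant term in $z$ has positive degree, then by Lemma \ref{decomposition in irreducibles}(ii) its leading term in $z$ is $z^{n_j}$ with $n_j\ge1$, so $s_j(t_1,\ldots,t_k,z)=z^{n_j}\widetilde s_j(t_1,\ldots,t_k,z^{-1})$ with $\widetilde s_j$ a unit of $\mathbb{Q}[t_1,\ldots,t_k][[z^{-1}]]$, whence $\deg_zr_{je}<n_j$ makes $r_{je}/s_j^e$ a series in $z^{-1}$ supported in powers $\le n_j(1-e)-1<0$. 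Therefore
\[
h''(t_1,\ldots,t_k,z):=\sum_{j,e}\frac{r_{je}(t_1,\ldots,t_k,z)}{s_j^e(t_1,\ldots,t_k,z)}
\]
involves only strictly negative powers of $z$.

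The hard part will be to verify that, under these expansions, $h'+h''$ reproduces the given expansion $\sum_nf_nz^n$ of $f$, rather than some other Laurent expansion of the rational function $f$. Here I would note that, by Lemma \ref{decomposition in irreducibles}, the polynomials $z$, the $q_l$ and the $s_j$ all divide the denominator of the presentation $f=g(t_1,\ldots,t_k,z,z^{-1})\prod_i\bigl(1-t_1^{a_{i1}}\cdots t_k^{a_{ik}}z^{b_i}\bigr)^{-1}$ from which $\sum_nf_nz^n$ was formed; consequently $f$ and each partial fraction summand lie in the localization of $\mathbb{Q}[t_1,\ldots,t_k][z,z^{-1}]$ at the factors $1-t_1^{a_{i1}}\cdots t_k^{a_{ik}}z^{b_i}$, and the rule ``expand each such factor as its geometric series'' defines a ring homomorphism of this localization into a suitable ring of two--sided Laurent series $\sum_nc_nz^n$, $c_n\in\mathbb{Q}[[t_1,\ldots,t_k]]$ (the infinite sums converging $(t_1,\ldots,t_k)$--adically because the monomials $t_1^{a_{i1}}\cdots t_k^{a_{ik}}$ are non--constant). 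Applying this homomorphism to the partial fraction identity gives $\sum_nf_nz^n=h'+h''$; since the $z$--supports of $h'$ and $h''$ are disjoint, uniqueness of Laurent coefficients then yields $\sum_{n\ge0}f_nz^n=h'$, that is $h=h'=p+\sum_{l,d}p_{ld}/q_l^d$, as well as $\sum_{n<0}f_nz^n=h''$.

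Finally $f_0$ is the coefficient of $z^0$ in the power series $h=h'$, hence is obtained by putting $z=0$, a substitution legitimate term by term precisely because each $q_l(t_1,\ldots,t_k,0)\ne0$; thus $f_0=h(t_1,\ldots,t_k,0)$. The one--term expansions and the bookkeeping of $z$--degrees in the second paragraph are routine; the only point requiring genuine care is the compatibility assertion of the third paragraph, that the purely algebraic partial fraction identity over $\mathbb{Q}(t_1,\ldots,t_k)$ survives the passage to the two--sided Laurent expansion attached to $f$. This is exactly where the hypotheses that $q_l$ have unit constant $z$--term and $s_j$ have unit leading $z$--term --- i.e. the shape of the irreducible factors delivered by Lemma \ref{decomposition in irreducibles} --- are used, singling out for each summand the unique expansion that may be added to the others inside the ring of Laurent series.
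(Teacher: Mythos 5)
Your proof is correct and follows essentially the same approach as the paper: classify the partial fraction summands by the shape of their denominators as given by Lemma \ref{decomposition in irreducibles}, show that the $q_l^d$ pieces together with $p$ expand with only nonnegative powers of $z$ while the $s_j^e$ pieces expand with only strictly negative powers, and conclude by matching disjoint $z$-supports. The third paragraph, where you verify that the term-by-term expansion of the partial fraction identity reproduces the given two-sided Laurent series $\sum_n f_n z^n$ via a homomorphism on the relevant localization, makes explicit a compatibility point the paper treats as implicit; it is a refinement rather than a different route.
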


\begin{proof}
The polynomials $q^d(t_1,\ldots,t_k,z)$ and $s^e(t_1,\ldots,t_k,z)$ in the denominators
in the expression of $f(t_1,\ldots,t_k,z)$ are of the form prescribed in
Lemma \ref{decomposition in irreducibles} (or $s(t_1,\ldots,t_k,z)=z$). Hence we may assume that
\[
q(t_1,\ldots,t_k,z)=1+zv(t_1,\ldots,t_k,z),
\]
\[
s(t_1,\ldots,t_k,z)=z^n+w(t_1,\ldots,t_k,z),
\]
$v, w\in{\mathbb Q}[t_1,\ldots,t_k,z]$, $\deg_zw<n$,
$\deg w(t_1,\ldots,t_k,0)>0$ (or $s(t_1,\ldots,t_k,z)=z$).
The expansion of the fractions with denominators of the form $q^d(t_1,\ldots,t_k,0)$ belongs to ${\mathbb Q}[[t_1,\ldots,t_k,z]]$
because the expression
\[
\frac{1}{q^d(t_1,\ldots,t_k,z)}=\frac{1}{(1+zv(t_1,\ldots,t_k,z))^d}
\]
\[
=(1+zv(t_1,\ldots,t_k,z)+z^2v^2(t_1,\ldots,t_k,z)+\cdots)^d
\]
does not involve negative degrees of $z$. By similar arguments, when $\deg s(t_1,\ldots,t_k,0)>0$, all monomials in the expansion of
\[
\frac{1}{s^e(t_1,\ldots,t_k,z)}=\frac{1}{(z^n+w(t_1,\ldots,t_k,z))^e}=\frac{1}{z^{nd}(1+u(t_1,\ldots,t_k,z^{-1}))^e}
\]
\[
=\frac{1}{z^{nd}}(1+u(t_1,\ldots,t_k,z^{-1})+u^2(t_1,\ldots,t_k,z^{-1})+\cdots)^d
\]
involve factors $z^{-m}$ with negative degrees of $z$ with $m\geq n$. Since $m$ is larger than the degree in $z$
of the corresponding numerator $r(t_1,\ldots,t_k,z)$,
the expansion of these fractions contains only negative degrees of $z$ and does not contribute to
$h(t_1,\ldots,t_k,z)$. When $s(t_1,\ldots,t_k,z)=z$ the numerator $r(t_1,\ldots,t_k,z)$ does not depend on $z$
and hence these fractions do not participate in $h(t_1,\ldots,t_k,z)$ again.
\end{proof}

\begin{algorithm}\label{algorithm of Xin}
We want to solve the homogeneous linear Diophantine system of equations and inequalities
\begin{equation}\label{general system to solve by Xin}
\begin{array}{|rcrl}
a_{11} x_1& + \cdots +& a_{1k} x_k &= 0 \\
&\cdots&&\\
a_{l1} x_1& + \cdots + &a_{lk} x_k &= 0 \\
a_{l+1,1} x_1& + \cdots +& a_{l+1,k} x_k&\geq 0 \\
&\cdots&&\\
a_{m1} x_1& + \cdots +& a_{mk} x_k&\geq 0. \\
\end{array}
\end{equation}
We start with the function
\[
u(t_1,\ldots,t_k)=\prod_{i=1}^k\frac{1}{1-t_i},
\]
replace the variables $t_i$ by $t_iz^{a_{1i}}$, $i=1,\ldots,k$, and expand $u(t_1z^{a_{11}},\ldots,t_kz^{a_{1k}})$ in the form (\ref{equation for partial fraction})
\[
f(t_1,\ldots,t_k,z)=\sum_{n=-\infty}^{\infty}f_n(t_1,\ldots,t_k)z^n.
\]
Applying Proposition \ref{partial fraction} we obtain
\[
h(t_1,\ldots,t_k,z)=\sum_{n=0}^{\infty}f_n(t_1,\ldots,t_k)z^n
=p(t_1,\ldots,t_k,z)+\sum_{l,d}\frac{p_{ld}(t_1,\ldots,t_k,z)}{q_l^d(t_1,\ldots,t_k,z)}.
\]
All polynomials $q_l(t_1,\ldots,t_k,z)$ in the denominators are divisors of some
$1-t_1^{b_1}\cdots t_k^{b_k}z^c$, multiplying the numerators and denominators with suitable polynomials we present
$h(t_1,\ldots,t_k,z)$ as a fraction with denominator in the form $\prod(1-t_1^{b_1}\cdots t_k^{b_k}z^c)$, i.e.
the result is a nice rational function. If we start with an equation
$a_{11} x_1 + \cdots + a_{1k} x_k = 0$ from (\ref{general system to solve by Xin})
we take $f_0(t_1,\ldots,t_k)=h(t_1,\ldots,t_k,0)$, continue the work with
$f_0(t_1,\ldots,t_k)$ and handle the next equation or inequality of (\ref{general system to solve by Xin}).
If we have an inequality $a_{11} x_1 + \cdots + a_{1k} x_k \geq 0$, we make the next step with the function
$h(t_1,\ldots,t_k,1)$ which takes into account all $f_n(t_1,\ldots,t_k)$, $n\geq 0$.
Continuing in the same way, we obtain in each step a nice rational function which is the characteristic series
of the solutions of the first several equations and inequalities of (\ref{general system to solve by Xin}).
At the final step, we obtain the characteristic series of the solutions of the whole system.
\end{algorithm}

\begin{example}\label{example for algorithm of Xin}
We start with the system from Example \ref{solution of system by Gordan}
\[
\begin{array}{|rrrrl}
x_1&+2x_2&-x_3&-x_4&=0\\
2x_1&+3x_2&-2x_3&-x_4&=0.\\
\end{array}
\]
By Algorithm (\ref{algorithm of Xin}),
\[
u(t_1,t_2,t_3,t_4)=\frac{1}{(1-t_1)(1-t_2)(1-t_3)(1-t_4)},
\]
\[
f(t_1,t_2,t_3,t_4,z)=u(t_1z,t_2z^2,t_3z^{-1},t_4z^{-1})
\]
\[
=\frac{1}{(1-t_1)(1-t_2z^2)(1-t_3z^{-1})(1-t_4z^{-1})}
\]
\[
=\frac{t_3^2}{(t_3-t_4)(1-t_1t_3)(1-t_2t_3^2)(z-t_3)}-\frac{t_4^2}{(t_3-t_4)(1-t_1t_4)(1-t_2t_4^2)(z-t_4)}
\]
\[
+\frac{(1+t_1t_3+t_1t_4+t_2t_3t_4+(t_1+t_2t_3+ t_2t_4+ t_1t_2t_3t_4)z)t_2}
{(t_2-t_1^2)(1-t_2t_3^2)(1-t_2t_4^2)(1-t_2z^2)}
\]
\[
-\frac{t_1^2}{(t_2-t_1^2)(1-t_1t_3)(1-t_1t_4)(1-t_1z)},
\]
\[
h(t_1,t_2,t_3,z)=\frac{(1+t_1t_3+t_1t_4+t_2t_3t_4+(t_1+t_2t_3+ t_2t_4+ t_1t_2t_3t_4)z)t_2}
{(t_2-t_1^2)(1-t_2t_3^2)(1-t_2t_4^2)(1-t_2z^2)}
\]
\[
-\frac{t_1^2}{(t_2-t_1^2)(1-t_1t_3)(1-t_1t_4)(1-t_1z))},
\]
\[
f_0(t_1,t_2,t_3,t_4)=h(t_1,t_2,t_3,0)=\frac{1+t_2t_3t_4-t_1t_2t_3^2t_4-t_1t_2t_3t_4^2}{(1-t_1t_3)(1-t_1t_4)(1-t_2t_3^2)(1-t_2t_4^2)}.
\]
We continue in the same way with the second equation and present $f_0(t_1z^2,t_2z^3,t_3z^{-2},t_4z^{-1})$ as a sum of partial fractions.
Finally we obtain the characteristic series of the solutions of
(\ref{example homogeneous system})
\[
\chi_S(t_1,t_2,t_3,t_4)=\frac{1}{(1-t_1t_3)(1-t_2t_3t_4)}=\sum_{m,n\geq 0}(t_1t_3)^m(t_2t_3t_4)^n.
\]
This means that all solutions of the system are
\[
m(1,0,1,0)+n(0,1,1,1),\quad m,n\in{\mathbb N},
\]
i.e. the minimal solutions are $(1,0,1,0)$ and $(0,1,1,1)$.
\end{example}

\section{Our solution of the problem of Robles-P\'erez and Rosales}

We shall illustrate the method of Elliott and the algorithm of Xin on the example of the Diophantine transport problem given in \cite{RPR}
which was one of the two main motivations of the present project. As stated in \cite{RPR}, the example is the following.

{\it A transport company carries cars from the factory to a dealer using small and large trucks with a capacity of three and six cars.
The trucks cost for the company $1200$ and $1500$ euros, respectively.
The company receives from the dealer $300$ euros for each transported car and offers as a bonus the transportation of an additional car without charge.
The company considers that the ordered transport is profitable when it has a profit of at least $900$ euros.
How many cars must be transported at least in order to achieve that purpose?}

If $y$ denotes the required number of cars, $x_3$ and $x_6$ are the numbers of the small and the large trucks, respectively,
the problem is equivalent to the linear Diophantine system
\[
\begin{array}{|ll}
300y &\geq 1200x_3+1500x_6+900\\
y+1 &\leq  3x_3+6x_6\\
\end{array}
\]
which after a simplification has the form:
\begin{equation}\label{systemD}
\begin{array}{|ccl}
y &\geq &4x_3+5x_6+3\\
y &\leq &3x_3+6x_6-1\\
\end{array} \qquad \Longrightarrow \qquad
\begin{array}{|rcl}
-4x_3-5x_6+y-3&\ge &0\\
3x_3+6x_6-y-1 &\ge &0.\\
\end{array}
\end{equation}

The goal of the paper \cite{RPR} was to prove that the set $T$ of the integers $n$
for which the system (\ref{systemD}) has a solution $(x_3,x_6,y)=(r_3,r_6,n)\in{\mathbb N}^3$
together with 0 forms a submonoid of $({\mathbb N},+)$, and to give algorithmic procedures how to compute $T$.
We shall extend this goal and show how to find the set $S$ of all solutions $(r_3,r_6,n)\in{\mathbb N}^3$ of the system.
In particular, we shall discuss the relations between the profit and the solutions of the system.

\begin{remark}
As in the one-dimensional case considered in \cite{RPR}, it is easy to see that the set $S$ of solutions
$(r_3,r_6,n)$ of the system (\ref{systemD}), together with $(0,0,0)$ forms a submonoid of $({\mathbb N}^3,+)$.
\end{remark}

\begin{solution of system}
Applying Theorem \ref{reduction from arbitrary to homogeneous} and Algorithm \ref{algorithm of Xin},
the first inequality of (\ref{systemD}) is replaced by the equation
$-4x_3-5x_6+y-3t=0$ which has to be solved for $t=1$. We start with the function
\[
u(x_3,x_6,y,t)=\frac{1}{(1-x_3)(1-x_6)(1-y)(1-t)}
\]
and replace the variables $x_3,x_6,y,t$ by $x_3z^{-4},x_6z^{-5},yz,tz^{-1}$, respectively.
Presenting the obtained function $f(x_3,x_6,y,t,z)$ as a sum of partial fractions with respect to $z$
\[
f(x_3,x_6,y,t,z)=u(x_3z^{-4},x_6z^{-5},yz,tz^{-3})
=\sum_{n=-\infty}^{\infty}f_n(x_3,x_6,y,t)z^n,
\]
we obtain
\[
h(x_3,x_6,y,t,z)=\sum_{n=0}^{\infty}f_n(x_3,x_6,y,t)z^n=\frac{1}{(1-x_3y^4)(1-x_6y^5)(1-y^3t)(1-yz)}.
\]
By Theorem \ref{reduction from arbitrary to homogeneous} the solutions of the first inequality in
(\ref{systemD}) are obtained from the solutions $(x_3,x_6,y,t)$ with $t=1$.
In the expansion
\[
h(x_3,x_6,y,t,z)=\sum_{d=0}^{\infty}h_d(x_3,x_6,y,z)t^d
\]
these solutions correspond to the coefficient $h_1(x_3,x_6,y,z)$. Hence
\[
h_1(x_3,x_6,y,z)=\frac{y^3}{(1-x_3y^4)(1-x_6y^5)(1-yz)}.
\]
Instead of the second inequality of (\ref{systemD}) we consider the equation
\[
3x_3+6x_6-y-v=0.
\]
We start with the function
\[
p(x_3,x_6,y,v)=\frac{1}{1-v}h_1(x_3,x_6,y,1)=\frac{y^3}{(1-x_3y^4)(1-x_6y^5)(1-y)(1-v)}
\]
and applying Algorithm \ref{algorithm of Xin} we obtain
\[
q(x_3,x_6,y,v,w)=p(x_3w^3,x_6w^6,yw^{-1},vw^{-1})
\]
\[
=\frac{y^3}{w^3(1-x_3y^4w^{-1})(1-x_6y^5w)(1-yw^{-1})(1-vw^{-1})}=\sum_{m=-\infty}^{\infty}q_m(x_3,x_6,y,v)w^m,
\]
\[
r(x_3,x_6,y,v,w)=\sum_{m=0}^{\infty}q_m(x_3,x_6,y,v)w^m
\]
\[
=\frac{x_6^3y^{18}}{(1-x_6y^6)(1-x_3x_6y^9)(1-x_6y^5v)(1-x_6y^5w)}=\sum_{k=0}^{\infty}r_k(x_3,x_6,y,w)v^k,
\]
\[
r_1(x_3,x_6,y,w)=\frac{x_6^4y^{23}}{(1-x_6y^5w)(1-x_6y^6)(1-x_3x_6y^9)},
\]
\[
\chi_S(x_3,x_6,y)=r_1(x_3,x_6,y,1)=\frac{x_6^4y^{23}}{(1-x_6y^5)(1-x_6y^6)(1-x_3x_6y^9)}.
\]
Hence the set $S$ of all solutions $(r_3,r_6,n)$ of the system (\ref{systemD}) are
\[
(r_3,r_6,n)=(0,4,23)+c_1(0,1,5)+c_2(0,1,6)+c_3(1,1,9),\quad c_1,c_2,c_3\in{\mathbb N}.
\]
Replacing $x_3$ and $x_6$ with 1 in $\chi_S(x_3,x_6,y)$ we obtain the characteristic series of the set $T$ of the possible values of $n$:
\[
\chi_T(y)=\chi_S(1,1,y)=\frac{y^{23}}{(1-y^5)(1-y^6)(1-y^9)}
\]
which expresses the original solution of the problem in \cite{RPR} in the form of its characteristic series.
\end{solution of system}

\begin{remark}
Using the expression for $\chi_S(x,y,z)$ found above, it is easy to find a relation between the solutions and the corresponding profit.
Since for each transported car the firm gains 300 euros and pays, respectively, 1200 euros and 1500 euros for each small and large truck,
we shall consider the function
\[
\omega(x_3,x_6,y,t)=\chi_S(x_3t^{-4},x_6t^{-5},yt)=\frac{x_6^4y^{23}t^3}{(1-x_6y^5)(1-x_3x_6y^9)(1-x_6y^6t)}
\]
\[
=\sum_{k=3}^{\infty}\omega_k(x_3,x_6,y)t^k=\sum_{k=3}^{\infty}\frac{x_6^{k+1}y^{6k+5}t^k}{(1-x_6y^5)(1-x_3x_6y^9)}.
\]
The firm will have a profit $300k$ euros for all $(r_3,r_6,n)$ such that
$x_3^{r_3}x_6^{r_6}y^n$ participates with a nonzero coefficient $\alpha(r_3,r_6,n)$ in the expansion
of $\omega_k(x_3^{r_3}x_6^{r_6}y^n)$ as a power series:
\[
\omega_k(x_3,x_6,y)=\frac{x_6^{k+1}y^{6k+5}}{(1-x_6y^5)(1-x_3x_6y^9)}=\sum_{r_3,r_6,n\geq 0}\alpha(r_3,r_6,n)x_3^{r_3}x_6^{r_6}y^n.
\]
In particular,
it is easy to see that the minimal number of transported cars to gain a profit $300k$ is
$n=6k+5$ (plus one car as a bonus) and for this purpose the firm has to use $k+1$ large trucks.
\end{remark}

\section*{Acknowledgements}
The second named author is very grateful to Andreas Weiermann for
his comments about the relations of the problems considered in the paper
with recursion theory and about the results of Seidenberg \cite{Sa} and Moreno-Soc\'{\i}as \cite{MS1, MS2}
in Theorems \ref{result of Moreno-Socias} and \ref{Ackermann function}.

\end{document}